\newtheorem{theo}{{\bfseries Theorem}}[section]
\newtheorem{prop}[theo]{{\bfseries Proposition}}
\newtheorem{thm}[theo]{{\bfseries Theorem}}
\newtheorem{lem}[theo]{{\bfseries Lemma}}
\newtheorem{cor}[theo]{{\bfseries Corollary}}
\newtheorem{df}[theo]{{\bfseries Definition}}
\newtheorem{ex}[theo]{{\bfseries Example}}
\newtheorem{rem}[theo]{{\bfseries Remark}}
\def \cO{\mathcal O}
\def \cN {\mathcal N}
\def \N {\mathbb N}
\def \cP{\mathcal P}
\def \I {\mathcal I}
\def \M {\mathcal M}
\def \K {\mathcal K}
\def \L {\mathcal L}
\def \D {\mathcal D}
\def \X {\mathcal X}
\def \Y {\mathcal Y}
\def \Z {\mathbb Z}
\def \R {\mathbb R}
\def \T {\mathbb T}
\begin{document}
	
\title[Characterization of Quasifactors]{Characterization of Quasifactors}
\vspace{1cm}
\author{ Anima Nagar}
\address{Department of Mathematics, Indian Institute of Technology Delhi, Hauz Khas, New Delhi 110016, INDIA}
\email{anima@maths.iitd.ac.in}
	
\vspace{.2cm}

	\renewcommand{\thefootnote}{}
	
	\footnote{2010 \emph{Mathematics Subject Classification}: Primary 37B05, 54H20; Secondary  54B20.}
	
	\footnote{\emph{Key words and phrases}: quasifactors, enveloping semigroups, minimal idempotents,  induced systems.}

	\begin{abstract}
				A flow $(X,T)$ induces the flow $(2^X,T)$. Quasifactors are minimal subsystems of $(2^X, T)$ and hence orbit closures of almost periodic points for $(2^X, T)$. We study quasifactors via the  almost periodic points for $(2^X,T)$.
		
	\end{abstract}
	\maketitle
	
	The two sister branches of  ``Topological Dynamics'' and ``Ergodic Theory'' often have parallel growth with almost similar properties. While \emph{topological dynamics} consists of studying  \emph{flows} $(X,T)$ with $X$  usually a compact topological space and $T$ being the acting	
	topological group, \emph{ergodic theory} deals with \emph{{processes}} $(X,\mu,T)$ on a standard Borel space $X$ with $T$ a measurable 
	transformation satisfying $\mu(T^{-1}(A))= \mu(A)$, for every Borel set $A$.
	
	\bigskip
	
\emph{`Disjointness'} is an important concept in  ``topological dynamics''. This concept was first introduced by Furstenberg for both the topological and ergodic cases \cite{F}, and since has been widely studied.

\bigskip

In \cite{SG} Glasner considered the  induced flow $(2^X,T)$ on the space $2^X$ of non empty closed subsets of $X$, with $T$ induced on $2^X$ and gave a necessary and sufficient condition for two minimal flows to be disjoint. For that he introduced the notion of \emph{`quasifactors'}. Briefly, \emph{quasifactors} are the minimal subsets of $2^X$, and in some way generalize the concept of factors. 

\bigskip

  A closed ally to the concept of disjointness is the concept of \emph{`joinings'}. A vivid exploration of the theory of joinings has been made by Glasner resulting in his fascinating book \cite{JOIN}.

  \bigskip
  
  The processes $(X, \mu, T)$ induce  processes $(\cP(X), \lambda,T)$ where $\cP(X)$ is the space of probability measures on $ X $ equipped with the weak* topology, $\lambda$ the associated measure on $\cP(X)$ and $T$ the induced measure preserving transformation on $\cP(X)$. Motivated by the notion of quasifactors in topological dynamics, Glasner \cite{SGQET} introduced an analogous notion in the context of ergodic theory. A general \emph{ergodic quasifactor}  of $ (X, \mu, T) $ is any $ T $-invariant measure on $ \cP(X) $ whose barycenter is $ \mu $. A joining of two processes gives rise to an ergodic quasifactor for each process, and in fact most ergodic quasifactors are obtained in this way. This defines what are called \emph{joining quasifactors} and have been studied by Glasner and Weiss, see \cite{SGQET, GW, EGBW}.
  
  \bigskip
  
  Auslander \cite{JA} studied the properties of `joining quasifactors' in the domain of topological dynamics via the rich algebraic theory of Ellis groups. The study was further explored and more properties of such `joining quasifactors'  were studied by Glasner \cite{EG}. Various other properties of quasifactors have been explored here, and many interesting examples constructed.

  \bigskip
  
  For a minimal $(X, T)$,  a proper  quasifactor can not be
  disjoint from it. What could be other properties of quasifactors?
  
  Quasifactors may not necessarily inherit all dynamical properties of the system. A weakly mixing process admits a quasifactor which is not weakly mixing. Distality and zero-entropy are preserved by ergodic quasifactors and though distality is preserved, there is a zero-entropy flow that admits positive-entropy quasifactor in the topological case. Ergodic quasifactors are preserved under passage to factors though this is not generally true in the topological case.  In the topological realm, it can be seen that the quasifactor of a minimal equicontinuous system is isomorphic to a factor of the system. Quasifactors of metrizable systems are metrizable and quasifactors of uniformly rigid systems are uniformly rigid. A quasifactor of a minimal proximal system  need not be proximal. We refer to \cite{JA, SG,  SGP, GAPP, SGQET, EG, JOIN, GW, EGBW} for more details.

\bigskip

This leads to a natural motivation of investigating more properties of quasifactors.

\bigskip

 Since quasifactors are minimal subsets of $(2^X, T)$, it becomes important to study the  almost periodic points in $ 2^X $. Note that $2^X$ can never be minimal.

We study quasifactors of flows $(X,T)$ where $X$ is a compact metric space and $T$ is an abelian discrete group, irrespective of the flows being minimal, with the background of  \emph{ enveloping semigroups}. We look into quasifactors as orbit closures of almost periodic points in $2^X$, which happen to be primarily the fixed points of minimal idempotents in the enveloping semigroup $E(2^X)$ or the sets fixed by minimal idempotents in $\beta T$, the Stone-\^{C}ech compactification of $T$, via the circle operator.

It is known that the minimal idempotents in $\beta T$ are  \emph{ultrafilters} mostly comprising of \emph{IP sets}, and all that we know about $E(2^X)$ is the discussion in \cite{TDES}. Thus it is extremely painstaking to compute the idempotents in either of $E(2^X)$ or $\beta T$. On the contrary, it is relatively easier to compute the minimal idempotents in $E(X)$. 

\bigskip

We attempt to compute the almost periodic points in $2^X$ using minimal idempotents in $E(X)$. We derive elementary properties of quasifactors by such a method. After discussing basic theory in Section 1, we discuss the related concept of almost periodic sets in Section 2. Section 3 deals with studying an alternate definition of the circle operator based on $E(X)$. Finally, Section 4 is devoted to the study of almost periodic points in $2^X$ and quasifactors.

\bigskip

Some related stuff is studied in \cite{eAeG}, especially Theorem $1$ and Theorem $1'$ there.

\bigskip

\smallskip

The author thanks Joseph Auslander and Eli Glasner for many insightful discussions.

\section{Preliminaries}

 All our  notations and definitions are as in \cite{TDES}.
 
 \bigskip

Let $(X,d)$ be an infinite, compact metric space and $T$ be a countable, discrete abelian group. $2^X$ is  the space of all nonempty closed subsets of $X$,  endowed with the Hausdorff topology.  

\smallskip

Given a point $x \in X$ and a closed set $A \subseteq X$, recall
$d(x, A) = \inf \limits_{a \in A} d(x, a)$ and  \emph{the Hausdorff metric} is defined as 

$$d_H(A,B) \quad = \quad \max \{\sup \limits_{a \in A} d(a, B),  \sup \limits_{b \in B} d(b, A)\}, \ \forall \ A, B \in 2^X.$$

Since $X$ is compact, we occasionally use an equivalent topology on $2^X$. Define for
any collection $\{ U_i : 1 \leq i \leq n \}$ of  open and nonempty subsets of $X$,

$$\langle U_1, U_2, \ldots U_n \rangle =  \{E \in 2^X :E \subseteq \bigcup \limits_{i=1}^n
U_{i}, \ E \bigcap U_{i} \neq \phi,  \textrm{ } 1 \leq i \leq n \}$$

The topology on $2^X$, generated by such collection as basis, is
known as the {\emph{Vietoris topology}}.

Note that $2^X$ is also compact under this topology.

The induced acting topological group $T$ is defined as: $\forall \ t \in T$, $tA = \{ta:a \in A\}$.

\bigskip
 
 We follow \cite{SG} in calling the transitive flow $(X,T)$ as a \emph{pointed flow} $ (T,X,x_0) $  with a distinguished transitive point $x_0$.

Let $(X,T)$ be a weakly mixing or mixing metric flow. Then there is a $x_0 \in X$ such that $\overline{Tx_0} = X$. Infact, we can write our system as a pointed system $(X,x_0,T)$.

Now the induced flow $(2^X,T)$ is also a weakly mixing(topologically transitive) or mixing metric flow. Then there is a $C_0 \in 2^X$ such that $\overline{TC_0} = 2^X$. Some basic properties of such a transitive point $C_0$ in $2^X$ have been studied in \cite{AN, TDES}.

\bigskip

The flow $(\beta T, T)$ is a universal point transitive
flow, i.e. for every point transitive flow $(X,T)$ and a
point $x_0 \in X$ such that $\overline{{O}(x_0)} = X$, there is a unique homomorphism $\beta T \to X$ such that $e \to x_0$.

Identify $t\in T$ with the map $t\longrightarrow tx$. So without loss of generality $T$ can be considered as a subset of $X^X$. Hence this action of $\beta T$ can be thought of as a representation of $\beta T$ in $X^X$, the compact space of all mappings of $X$ into itself. The image of $\beta T$ there, is a semigroup which is the closure of $T$ (represented itself as a subset of $X^X$) in $X^X$. This semigroup, denoted  by $E(X)$, is the enveloping  semigroup of the flow $(X,T)$. In fact as shown by Ellis,  there exists a continuous map $\Phi: \beta T \rightarrow X^X$ which is an extension of $\phi: T \longrightarrow X^X$ such that $\Phi(\beta T) = E(X)$.

\bigskip

\begin{lem} \label{O(x)} Let $(X,T)$ be a flow and $x \in X$. Then

(i) $\overline{O(x)} = (\beta T)x = E(X)x$

(2) $\overline{O(x)}$ is minimal if and only if $x \in \M x$ for every minimal ideal $\M \subset \beta T \ \text{or} \ E(X)$
 if and only if  for every minimal ideal $\M \subset \beta T \ \text{or} \ E(X)$ there is an idempotent $u \in \M$ such that $ux = x$.\end{lem}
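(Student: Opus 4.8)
The plan is to establish Lemma~\ref{O(x)} by exploiting the universal property of $(\beta T, T)$ together with the standard structure theory of minimal left ideals in enveloping semigroups. For part (i), the strategy is to use the homomorphism $\Phi : \beta T \to X^X$ whose image is $E(X)$: given $x \in X$, I would first show $(\beta T)x = E(X)x$ by noting that every element of $E(X)$ arises as $\Phi(p)$ for some $p \in \beta T$, so the two orbit-type sets coincide as subsets of $X$. To identify these with $\overline{O(x)}$, I would argue that $\overline{O(x)} \supseteq (\beta T)x$ because $T \subseteq \beta T$ (the dense copy) already gives $O(x) = Tx \subseteq (\beta T)x$, and $(\beta T)x$ is compact (being a continuous image of the compact space $\beta T$ under the evaluation map $p \mapsto px$), hence closed, so it contains the closure $\overline{O(x)}$. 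For the reverse inclusion, I would use that the evaluation map is continuous and $T$ is dense in $\beta T$, so $(\beta T)x = \overline{T}^{\,\beta T} x \subseteq \overline{Tx} = \overline{O(x)}$.

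For part (2), the key reduction is to characterize minimality of $\overline{O(x)}$ in terms of the minimal left ideals of the semigroup. I would first recall the algebraic fact that $\beta T$ (equivalently $E(X)$) contains minimal left ideals, each of which is itself a minimal subset under the action, and that the minimal subsets of $\overline{O(x)} = (\beta T)x$ are exactly the sets $\M x$ for minimal ideals $\M$. The plan is to prove the three equivalences cyclically. First, if $\overline{O(x)}$ is minimal, then since $\M x \subseteq (\beta T)x = \overline{O(x)}$ is a nonempty closed invariant subset, minimality forces $\M x = \overline{O(x)} \ni x$, giving $x \in \M x$ for each minimal $\M$. Conversely, if $x \in \M x$ for every minimal ideal, I would show $\overline{O(x)} = \M x$ is minimal: membership $x \in \M x$ means $x$ lies in a minimal set, and since $\overline{O(x)}$ is the smallest closed invariant set containing $x$, it must equal that minimal set $\M x$.

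The bridge to the idempotent formulation rests on the Ellis--Namakura structure theorem: every minimal left ideal $\M$ is the union of disjoint groups $u\M$ indexed by the idempotents $u$ of $\M$, and the idempotents act as identities on their groups. The plan is to show that $x \in \M x$ holds if and only if there is an idempotent $u \in \M$ with $ux = x$. For the forward direction, if $x \in \M x$, then $x = px$ for some $p \in \M$; I would locate an idempotent $u$ in the same minimal left ideal (using that $\overline{\beta T\, p} \cap \M$ meets an idempotent's group, or more directly that the left ideal $\M p$ is minimal and contains an idempotent $u$ with $up = p$) and then verify $ux = u(px) = (up)x = px = x$. The converse is immediate since $ux = x$ with $u \in \M$ yields $x = ux \in \M x$.

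The main obstacle I anticipate is the careful handling of the idempotent existence argument in the semigroup: one must invoke the Ellis theorem guaranteeing idempotents in every minimal left ideal and correctly manipulate the associativity and the left-ideal structure to produce an idempotent $u$ fixing $x$ from the mere relation $x = px$. Precisely matching the idempotent to the point $x$ (rather than to an arbitrary element of the orbit) is the delicate step, and I would ensure it by working inside the specific minimal left ideal containing $p$ and using that $u$ acts as a left identity on that ideal. The transfer between $\beta T$ and $E(X)$ throughout should cause no trouble, since $\Phi$ is a semigroup homomorphism carrying minimal ideals and idempotents of $\beta T$ onto those of $E(X)$, so the statements for the two semigroups are equivalent under $\Phi$.
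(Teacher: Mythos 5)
Your proposal is correct and is essentially the standard textbook argument; note that the paper itself states Lemma~\ref{O(x)} without proof, recalling it as background from the general theory of enveloping semigroups, so there is no in-paper proof to diverge from. All the key steps are present: $(\beta T)x=E(X)x$ via $\Phi$, the two inclusions giving $\overline{O(x)}=(\beta T)x$ (compactness of the continuous evaluation image for one direction, density of $T$ in $\beta T$ plus continuity of $p\mapsto px$ for the other), minimality of $\M x$ as a homomorphic image of the minimal flow $\M$, and the Ellis idempotent argument producing $u\in\M$ with $up=p$ and hence $ux=upx=px=x$. Two small presentational points: in part (i) you announce the inclusion $\overline{O(x)}\supseteq(\beta T)x$ but the argument you attach to it (that $(\beta T)x$ is closed and contains $Tx$) proves the opposite inclusion --- both directions are in fact established, just mislabelled; and you speak of minimal \emph{left} ideals where the paper at one point says minimal \emph{right} ideals (and elsewhere says left), a convention discrepancy in the source rather than a gap in your reasoning, since with the action $p\mapsto tp$ and right-continuous multiplication the closed invariant subsets of $\beta T$ are exactly the left ideals, as you use them.
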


\bigskip

Given a minimal flow $(X,T)$ and an idempotent $u \in \M$, there is a point $x_0 \in uX = \{ux : x \in X\} = \{y \in X: uy = y\}$. Under the canonical map $(\beta T,e) \to (X,x_0)$, $\M$ is mapped onto $X$ and $u$ onto $x_0$. Thus $(\M,u)$ is a universal minimal pointed flow in the sense that for every minimal flow $X$
there is a point $x_0 \in X$ such that $(X,x_0)$ is a factor of $(\M,u)$.

\bigskip

Now $T$ acts on $\beta T$. And $\beta T$ is point transitive, though not necessarily topologically transitive or weakly mixing or mixing. We can also consider the pointed system $(\beta T, e, T)$ where $e$ is the identity in $T$.

Also for $p \in \beta T$ if $t_i \to p$ with $t_i \in T$, we have $px = \lim t_i x$, for all $x \in X$.
The map $\rho: \beta T \to X$ defined as $p \stackrel{\rho}{\to} px_0$ with $p \in \beta T$, and $x_0 \in X$ $-$ the transitive point, defines a flow homomorphism between $(\beta T, e, T)$ and $(X, x_0, T)$.
$$\rho(tq) = tq(x_0) = t(qx_0) =  t \rho(q)$$

We recall  that the minimal subsets of the flow $(\beta T, T)$ (all are isomorphic) coincide with the minimal right ideals of the semigroup $\beta T$.
These are universal minimal flows - every minimal flow is a homomorphic image. We fix a universal minimal flow $(\M, T)$, and let
$J(\M)$ denote the set of idempotents in $\M$. Then $J(\M)$ is non-empty.

Let $(Y, T)$ be a minimal
subflow of $(X,T)$. Then for $y \in Y$, there is a $u \in J(\M)$ such that $uy = y$. Now $ u ty = tu y = ty$. Thus, one can say that $\rho(\M) = Y$. Similarly for every minimal  $Y \subset X$, there exists a minimal right ideal $\M_Y \subset \beta T$ such that $\rho(\M_Y) = Y = \overline{O(y)}$ and an idempotent $u \in J(\M_Y)$ such that $uy = y$.

\vskip .5cm

For the induced flow $(2^X,T)$,  the notion of a ``\emph{circle operator}" as an action of $\beta T$  on $2^X$ was defined by Ellis, Glasner and Shapiro \cite{EGS1}. Identify $t\in T$ with the map $t\longrightarrow tA$, for $A \in 2^X$. So without loss of generality $T$ can be considered as a subset of $(2^X)^{2^X}$. Hence this action of $\beta T$ can be thought of as a representation of $\beta T$ in $(2^X)^{2^X}$, the compact space of all mappings of $2^X$ into itself. The image of $\beta T$ there, is a semigroup which is the closure of $T$ (represented itself as a subset of $(2^X)^{2^X}$) in $(2^X)^{2^X}$. This semigroup is the enveloping  semigroup of the flow $(2^X,T)$. We denote this semigroup by $E(2^X)$. We recall its properties studied in \cite{TDES}.

In fact as shown by Ellis,  there exists a continuous map $\Psi: \beta T \rightarrow (2^X)^{2^X}$ which is an extension of $\psi: T \longrightarrow (2^X)^{2^X}$ such that $\Psi(\beta T) = E(2^X)$.

\bigskip

 $T$ acts on the induced system $2^X$ as $tA=\lbrace ta: a\in A\rbrace$ for any $t\in T$. The \emph{circle operator} is an action of $\beta T$ on the induced system $2^{X}$. Let $\emptyset\neq A= \overline{A}\subset X$ and
 $p\in \beta T$. The circle operation of $\beta T$ on $2^X$ is defined as
 
 \centerline{$p\circ A= \lbrace x\in X: \exists$ nets $ \lbrace a_{i}\rbrace \ \text{in} \ A$,  and  $\lbrace t_{i} \rbrace \ \text{in} \ T$ with $t_{i}\rightarrow p \in \beta T$  such that $t_{i}a_{i}\rightarrow x\rbrace$.}

 \vskip .5cm

Thus for $p \in \beta T$ if $t_i \to p$ with $t_i \in T$, we have $p \circ C = \lim t_i C$, for all $C \in 2^X$, where this limit is taken with respect to the Hausdorff metric. Hence,

\bigskip

\begin{tabular}{|c|}
	\hline \\
$p\circ A= \lbrace x\in X: \exists\ \text{nets} \ \lbrace a_{i}\rbrace$ in $ A, \ \text{and} \ \lbrace t_{i} \rbrace \ \text{in} \ T \ \text{with} \ t_{i}\rightarrow p \in \beta T \ \text{such that}  \ t_{i}a_{i}\rightarrow x\rbrace = \Psi(p)A.$\\
\hline
\end{tabular}

\bigskip

\emph{and the circle operation gives an action of $E(2^X)$ on $2^X$.}

\bigskip

\emph{We note  here that the convergence $t_i \to p$ with $t_i \in T$ does not depend on the topology of $\beta T$, but rather on the action of $T$ on $X$.}

\bigskip

Notice that $pA = \{pa: a \in A\} \subset p \circ A = \Psi(p)A$, since $pa = \lim t_i a$ for $t_i \rightarrow p$. Hence it is also interesting to look into what comprises $p \circ A \setminus pA$. Since $p \circ A$ is the Hausdorff limit of $t_i A$, we can safely presume that $p \circ A \setminus pA = \{b: a_i \to b \ \text{with} \ a_i \in t_iA \ \text{and} \ t_i \to p\ \text{such that} \ b \notin pA\}$. We denote this set by $\widehat{pA}$. It is possible that $\widehat{pA} = \emptyset$, say for example when $A$ is finite.

\bigskip

The map $\zeta: (\beta T, T) \to (2^X,T)$ defined as $p \stackrel{\zeta}{\to} p \circ C_0$ with $p \in \beta T$, and $C_0 \in 2^X$ $-$ the transitive point, defines a flow homomorphism.

\bigskip

$\beta T$ acts on $2^X$ via the circle operator \cite{EGS1}. We denote the cumulative action of $\beta T$ or any $\mathcal{F} \subset \beta T$ on $A \in 2^X$ using the operator $\bigcirc$.

\bigskip

Thus, analogous to Lemma \ref{O(x)} we have

\begin{lem} \label{O(C)} For the flow $(2^X,T)$  and $C \in 2^X$:
	
	(i) $\overline{O(C)} = (\beta T) \bigcirc C = \bigcup  \{p\circ C: p \in \beta T\} = E(2^X)  C = \bigcup \{p C: p \in E(2^X)\}$
	
	(2) $\overline{O(C)}$ is minimal if and only if $C \in \M \bigcirc C$ or $\M C$ for every minimal ideal $\M \subset \beta T \ \text{or} \ E(2^X)$ if and only if in every minimal ideal $\M \subset \beta T \ \text{or} \ E(2^X)$ there is an idempotent $u \in \M$	such that $u \circ C = C$ or $ uC = C$.\end{lem}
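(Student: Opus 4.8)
The plan is to obtain Lemma~\ref{O(C)} as the image of Lemma~\ref{O(x)} under the dictionary that replaces the flow $(X,T)$ by its induced flow $(2^X,T)$, a point $x$ by a closed set $C\in 2^X$, the action $px$ by the circle operator $p\circ C$, and the enveloping semigroup $E(X)$ by $E(2^X)$. Everything needed to run this translation is already in place above: $(2^X,T)$ is itself a flow on a compact space, its enveloping semigroup is $E(2^X)=\Psi(\beta T)$, the group $\beta T$ acts on $2^X$ through $\Psi$, and the key identity $p\circ C=\Psi(p)C$ lets one pass freely between the circle operator on $\beta T$ and honest left multiplication by elements of $E(2^X)$. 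So the whole of Lemma~\ref{O(C)} should follow by applying Lemma~\ref{O(x)} to $(2^X,T)$, and the real work is only to check that these two descriptions of the action genuinely coincide at each step.

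For part~(i), first I would recall that $\overline{O(C)}$ is by definition the Hausdorff-closure of $\{tC:t\in T\}$ in $2^X$. If $t_i\to p$ in $\beta T$ then $t_iC\to p\circ C$ in the Hausdorff metric, so every $p\circ C$ lies in $\overline{O(C)}$; conversely, any $D\in\overline{O(C)}$ is a Hausdorff limit of a net $t_iC$, and by compactness of $\beta T$ a subnet satisfies $t_i\to p$, whence $D=p\circ C$. This gives $\overline{O(C)}=\bigcup\{p\circ C:p\in\beta T\}=(\beta T)\bigcirc C$. Rewriting $p\circ C=\Psi(p)C$ and using $\Psi(\beta T)=E(2^X)$ then yields $\overline{O(C)}=\bigcup\{qC:q\in E(2^X)\}=E(2^X)C$, which is the chain of equalities in~(i).

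For part~(2), the forward implication uses that $\zeta_C:\beta T\to 2^X$, $p\mapsto p\circ C$, is a flow homomorphism (since $(tp)\circ C=t(p\circ C)$), so for any minimal ideal $\M$ the image $\zeta_C(\M)=\M\bigcirc C$ is a minimal subsystem contained in $\overline{O(C)}$; minimality of $\overline{O(C)}$ forces $\M\bigcirc C=\overline{O(C)}\ni C$, i.e. $C\in\M\bigcirc C$. For the converse I would take $D=p\circ C\in\overline{O(C)}$ and show $C\in\overline{O(D)}$ by inverting $p$ inside a suitable minimal ideal, exactly as in the point case, so that $\overline{O(D)}=\overline{O(C)}$ for every $D$ and the orbit closure is minimal. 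The idempotent reformulation is the genuinely computational point: the fiber $\{q\in\M:q\circ C=C\}$ is nonempty by hypothesis, it is closed because $q\mapsto\Psi(q)C$ is continuous, and it is a subsemigroup because the circle operator is an action, so $(q_1q_2)\circ C=q_1\circ(q_2\circ C)=q_1\circ C=C$ whenever $q_1\circ C=q_2\circ C=C$; Ellis's idempotent lemma then supplies an idempotent $u$ in this fiber with $u\circ C=C$. Transporting through $\Psi$ turns the statements about $\M\subset\beta T$ and $\circ$ into the corresponding statements about $\Psi(\M)\subset E(2^X)$ and ordinary multiplication, giving the ``or'' clauses.

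The hard part will be the bookkeeping that keeps the circle operator and the enveloping-semigroup action synchronized, rather than any single estimate. The circle operator $p\circ C=\Psi(p)C$ is in general strictly larger than the naive set image $pC$ (the excess being the set $\widehat{pC}$ discussed above), so one cannot treat $\bigcirc$ as a pointwise set map; it is precisely the fact that $\Psi$ is a semigroup morphism onto $E(2^X)$ together with the continuity of the evaluation $q\mapsto\Psi(q)C$ that make the fibers close up under multiplication and let Ellis's lemma apply inside the compact right-topological semigroup $E(2^X)$. Once these compatibilities are recorded, every remaining step is the verbatim transcription of the proof of Lemma~\ref{O(x)}.
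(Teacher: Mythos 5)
Your proposal is correct and is exactly the route the paper intends: the paper states Lemma~\ref{O(C)} without proof, introducing it only with ``analogous to Lemma~\ref{O(x)} we have,'' and your argument is a careful execution of that analogy, using $p\circ C=\Psi(p)C$, continuity of $p\mapsto\Psi(p)C$, and the Ellis idempotent lemma in the closed subsemigroup $\{q\in\M:q\circ C=C\}$. The compatibilities you flag (that $\bigcirc$ is governed by the semigroup morphism $\Psi$ rather than by the pointwise image $pC$) are precisely the facts the paper records beforehand to justify the transcription.
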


\bigskip

Let $(\Y, T)$ be a minimal
subflow of $(2^X,T)$. Then for $B \in \Y$, there is a $u \in J(\M)$ such that $u \circ B = B$. Now $u \circ tB =  ut \circ B = tu \circ B  = tB$ and so $u  \Y \subset u \circ \Y$
is such that $u \circ \Y = \Y$. Thus, one can say that $\zeta(\M) = \Y$. Similarly for every minimal  $\Y \subset X$, there exists a minimal right ideal $\M \subset \beta T$ such that $\zeta(\M) = \Y = \overline{O(B)}$ and an idempotent $u \in J(\M)$ such that $u \circ B = B$.

\bigskip

Since $\beta T$ acts on both $X$ and $2^X$, the action of $p\in \beta T$ on $X$ is given as $x\longrightarrow px$ where $px=\lim t_{i}x$ whereas the action of $p\in \beta T$ on $2^X$ is given as $A\longrightarrow p\circ A$ where $p\circ A = \lbrace x\in X: t_{i}\longrightarrow p, \lbrace a_i \rbrace$ in $ A, t_{i}a_{i}\longrightarrow x\rbrace= \lim t_{i}A $ in $2^X$ where $t_{i}\longrightarrow p$ in $\beta T$.

Now $\Phi(\beta T) = E(X)$ and $\Psi(\beta T) = E(2^X)$. Also $\rho: (\beta T, T) \to (X,T)$ and $\zeta: (\beta T, T) \to (2^X,T)$ are the canonical factors. 

\bigskip

Thus for $t \in T$ and $q \in \beta T$, we can consider the canonical factors:

\begin{center}

 $\Phi: (\beta T, T) \to (E(X),T)$ defined as $\Phi(tq) = t \Phi(q)$, 
 
 $\Psi: (\beta T, T) \to (E(2^X),T)$ defined as $\Psi(tq) = t \Psi(q)$, 
 
 $\varrho: (E(X), T) \to (X,T)$ defined as  $\varrho(t\Phi(q)) =  \rho(tq) = t \rho(q)$ and
 
   $\varsigma: (E(2^X), T) \to (2^X,T)$ defined as  $\varsigma(t\Psi(q)) = \zeta(tq) = t \zeta(q)$.
   
\end{center}

Also we have:

\begin{thm} \cite{TDES} \label{humara}
	For a flow $(X,T)$, there is a continuous flow homomorphism $\theta: (E(2^X), T) \longrightarrow (E(X),T)$: 	where the map $\theta:E(2^X)\rightarrow E(X)$ is defined as 
	\begin{center}
		$\theta(\alpha)= \alpha'$ where $\lbrace\alpha'(x)\rbrace =\alpha(\lbrace x \rbrace)\ \forall\ x\in X$.
	\end{center} \end{thm}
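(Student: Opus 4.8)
The plan is to establish the single identity $\theta\circ\Psi=\Phi$ and deduce everything from it. First I would compute the circle operator on a singleton. For $p\in\beta T$ and $x\in X$, the defining description of $p\circ\{x\}$ forces the nets $\{a_i\}$ in $\{x\}$ to be constant, $a_i=x$, so that $p\circ\{x\}=\{y\in X: t_i x\to y \text{ for some net } t_i\to p\}$. Since the action of $\beta T$ on $X$ is $px=\lim t_i x$ whenever $t_i\to p$, and $X$ is Hausdorff, the only admissible limit is $y=px$. Hence $p\circ\{x\}=\{px\}$, i.e.\ $\Psi(p)(\{x\})=\{\Phi(p)(x)\}$ for every $x$. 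This is the crucial computation, and in my view the only place where the geometry of $2^X$ really enters: it says that every element of $E(2^X)$ carries singletons to singletons, because the singletons form a closed copy of $X$ inside $2^X$ (the embedding $x\mapsto\{x\}$ is isometric for $d_H$, hence a closed embedding of the compact space $X$), so a Hausdorff limit of singletons $\{t_i x\}$ is again a singleton.

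Having this, I would first check that $\theta$ is well defined. Any $\alpha\in E(2^X)$ equals $\Psi(p)$ for some $p\in\beta T$ since $\Psi(\beta T)=E(2^X)$, so $\alpha(\{x\})=\{px\}$ is a singleton for every $x$; as $\alpha(\{x\})$ depends only on $\alpha$ and not on the chosen $p$, the map $\alpha'$ with $\{\alpha'(x)\}=\alpha(\{x\})$ is unambiguously defined, and the computation above gives $\theta(\alpha)=\alpha'=\Phi(p)$, that is, $\theta\circ\Psi=\Phi$. In particular $\theta$ lands in $E(X)=\Phi(\beta T)$. The algebraic and equivariance properties are then routine evaluations on singletons: for $\alpha,\beta\in E(2^X)$ one has $(\alpha\beta)(\{x\})=\alpha(\beta(\{x\}))=\alpha(\{\theta(\beta)(x)\})=\{\theta(\alpha)(\theta(\beta)(x))\}$, giving $\theta(\alpha\beta)=\theta(\alpha)\,\theta(\beta)$; and for $t\in T$, $(t\alpha)(\{x\})=t\,\alpha(\{x\})=\{t\,\theta(\alpha)(x)\}$, giving $\theta(t\alpha)=t\,\theta(\alpha)$. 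Thus $\theta$ is a $T$-equivariant semigroup homomorphism.

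For continuity I would use that $\Psi:\beta T\to E(2^X)$ is a continuous surjection from the compact space $\beta T$ onto the Hausdorff space $E(2^X)\subset(2^X)^{2^X}$, hence a closed, and therefore a quotient, map. Because $\theta\circ\Psi=\Phi$ is continuous and $\Psi$ is a quotient map, $\theta$ is continuous. One can also argue directly: if $\alpha_j\to\alpha$ in the topology of pointwise convergence on $E(2^X)$, then $\{\theta(\alpha_j)(x)\}=\alpha_j(\{x\})\to\alpha(\{x\})=\{\theta(\alpha)(x)\}$ in $2^X$, and since $x\mapsto\{x\}$ is a homeomorphism onto its image, $\theta(\alpha_j)(x)\to\theta(\alpha)(x)$ for each $x$, which is exactly convergence of $\theta(\alpha_j)$ to $\theta(\alpha)$ in $E(X)$.

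The only genuine obstacle is the first step, namely verifying that each $\alpha\in E(2^X)$ sends singletons to singletons so that $\theta$ is even defined; once the identity $\Psi(p)(\{x\})=\{\Phi(p)(x)\}$ is in hand, the homomorphism and continuity assertions reduce to formal manipulations together with the fact that $\Psi$ is a quotient map.
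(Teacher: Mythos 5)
Your proof is correct. The paper imports this theorem from \cite{TDES} without reproducing a proof, but the boxed identity $\theta(\Psi(p))=\Phi(p)$ that the paper displays immediately after the statement is exactly the central identity you establish via the computation $p\circ\{x\}=\{px\}$, so your argument (well-definedness on singletons, equivariance by evaluation, continuity via $\Psi$ being a quotient map) is the intended one.
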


\begin{center}
\begin{tabular}{|c|}
	\hline
Notice that $\theta(\Psi(p)) = \Phi(p), \ \forall \ p \in \beta T$.\\
\hline
\end{tabular}
\end{center}

This gives the following picture:

$$\begin{matrix}
(2^X,T) & & & & (X,T)\\
 & {{\nwarrow {\zeta}}} & &{ {\nearrow{\rho}}} &  \\
 \Bigg\uparrow{\varsigma} & & (\beta T,T) & & \Bigg\uparrow{\varrho}\\
 & { {\swarrow{\Psi}}} & \circlearrowleft &{ {\searrow{\Phi}}} &  \\
(E(2^X),T) & &{ \stackrel{\theta}{\xrightarrow{\hspace*{2cm}}}}&  & (E(X),T)\\
\end{matrix}$$

\vskip .5cm

Again recall,

\begin{theo} \cite{ELL} \label{fac-en} Let $\pi:(X,T)\rightarrow (Y,T)$ be a factor map. Then there exists a factor map $\Pi:E(X)\rightarrow E(Y)$ such that $\Pi(pq)=\Pi(p)\Pi(q)$ and $\pi(px)=\Pi(p)\pi(x)$, $\ \forall \ p,q \in E(X)$ and $x \in X$. \end{theo}

which gives an induced factor map $\pi_*: (2^X, T) \to (2^Y, T)$ and a factor map $\Pi^\dagger: E(2^X) \to E(2^Y)$ such that $\Pi^\dagger(pq)=\Pi^\dagger(p)\Pi^\dagger(q)$ and $\pi_*(pA)=\Pi^\dagger(p)\pi_*(A)$, $\ \forall \ p,q \in E(2^X)$ and $A \in 2^X$.

This gives the following commutative diagram:

$$\begin{matrix}
(X,T) &  { \stackrel{\varrho}{\xleftarrow{\hspace*{1cm}}}} & (E(X),T) & { \stackrel{\theta}{\xleftarrow{\hspace*{1cm}}}} & (E(2^X),T)  &  { \stackrel{\varrho}{\xrightarrow{\hspace*{1cm}}}} & (2^X,T)\\
\Bigg\downarrow{\pi} & \circlearrowleft & \Bigg\downarrow{\Pi} & \circlearrowleft & \Bigg\downarrow{\Pi^\dagger} & \circlearrowleft &  \Bigg\downarrow{\pi_*}\\
(Y,T) &  { \stackrel{\varrho}{\xleftarrow{\hspace*{1cm}}}} & (E(Y),T) & { \stackrel{\theta}{\xleftarrow{\hspace*{1cm}}}} &(E(2^Y),T) & { \stackrel{\varrho}{\xrightarrow{\hspace*{1cm}}}}  & (2^Y,T)\\
\end{matrix}$$

\vskip .5cm

\begin{df} \cite{SG} If $(\Y, T)$ is a minimal subflow of the flow $(2^X, T)$ then we say that $(\Y, T)$	is a quasifactor of (X, T). \end{df}
	
	It is clear that $(X, T)$ itself  as the trivial flow	$(\{X\}, T)$ is a quasifactor.
	
	\vskip .5cm
	
	Now $A \in 2^X$ is an \emph{almost periodic point in $2^X$} if and only if there exists an idempotent $u \in E(2^X)$ for which $u  A = A$.

\vskip .5cm

\begin{tabular}{|c|}
\hline 
Thus the quasi-factors of $(X,T)$ are the orbit closures of essentially the points in $2^X$ fixed by the \\ idempotents in $E(2^X)$ i.e. the almost periodic points in $2^X$. \\ 
\hline
\end{tabular}

\bigskip

 If $(\X , T)$ and $(\Y, T)$ are quasifactors of the flow $(X, T)$, we say that $(\X , T)$ is
finer then $(\Y, T)$ if some element of $\X$ (and hence every element of $\X$ ) is
contained in some element of $\Y$.

\vskip .5cm

The points $x,y \in X$ in the
flow $(X, T)$ are distal if the orbit closure of the point $(x, y)$ in the flow $(X \times X, T)$ does not intersect the diagonal. The flow $(X, T)$ is distal if $ x, y \in X$
and $x \neq y$ implies $x$ and $y$ are distal points. A famous theorem of Furstenberg gives a
description of the structure of a minimal metric distal flow $-$
such a flow is build-up from an equicontinuous flow by successive isometric extensions. 

\bigskip

Let $J(X)$ denote the set of all idempotents in $E(X)$. We recall

\begin{theo} \cite{AF}
	For the system $(X,T)$, let $x \in X$.	Then the following are equivalent:
	
	(i) $x$ is a distal point.
	
	(ii) $vx = x$ for all  $v \in J(X)$.
	
	(iii) $ux = x$ for all minimal idempotents $u \in E(X)$.
\end{theo}

The points  $x, y \in X$ in the flow $(X, T)$ are called proximal if the closure of the orbit of the
point $(x, y)$ in the flow $(X \times X, T)$ intersects the diagonal. The flow $(X, T)$ is
proximal if every two points of $X$ are proximal.

\begin{thm} \cite{SG} If (X, T) is a minimal flow then it has a unique finest
proximal quasifactor.
\end{thm}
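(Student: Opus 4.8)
The plan is to exploit the paper's standing hypothesis that $T$ is abelian and show that the \emph{only} proximal quasifactor of a minimal $(X,T)$ is the trivial one $(\{X\},T)$. Since $(\{X\},T)$ is manifestly a proximal quasifactor, this would yield existence, uniqueness and the ``finest'' property all at once: the partially ordered set of proximal quasifactors collapses to a single point, which is then vacuously the unique finest element. So I would let $(\Y,T)$ be an arbitrary proximal quasifactor, use Lemma~\ref{O(C)} to choose an almost periodic $B\in\Y$, and fix an arbitrary $t\in T$.

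The key step is to manufacture, out of proximality, a genuinely $t$-invariant element of $\Y$. Since $(\Y,T)$ is proximal, the pair $(B,tB)$ is proximal in $(2^X,T)$, so there is $p\in\beta T$ with $p\circ B=p\circ tB$. Using that the circle operator is an action of $\beta T$ on $2^X$ (so $p\circ tB=p\circ(t\circ B)=(pt)\circ B$) together with the centrality $pt=tp$ of $t\in T$ in $\beta T$ — the very commutation the text invokes when it writes $u\circ tB=ut\circ B=tu\circ B$ — I rewrite $p\circ tB=(tp)\circ B=t\,(p\circ B)$. Hence $D:=p\circ B\in\Y$ satisfies $tD=D$, so the set $F_t:=\{D\in\Y:tD=D\}$ is nonempty.

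Next I would collapse $\Y$ by minimality. The set $F_t$ is closed, being the coincidence set of the continuous maps $D\mapsto tD$ and $D\mapsto D$ on $2^X$, and it is $T$-invariant: if $tD=D$ then $t(sD)=s(tD)=sD$ for every $s\in T$, again by commutativity. Thus $F_t$ is a nonempty closed invariant subset of the minimal flow $\Y$, forcing $F_t=\Y$. As $t$ was arbitrary, $T$ acts trivially on $\Y$; minimality then gives $\Y=\{B\}$ for a single $B$, and $tB=B$ for all $t$ says that $B$ is a nonempty closed $T$-invariant subset of the minimal flow $X$, whence $B=X$. Therefore $\Y=(\{X\},T)$, and the theorem follows.

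The main obstacle is genuinely the commutation/centrality step: the whole argument hinges on being able to push the proximality witness $p$ past $t$ so that a mere coincidence point becomes a bona fide $t$-invariant point, and then on recognizing $F_t$ as a subflow — both of which use abelianness of $T$ in an essential way. I would flag that for general (non-abelian) $T$, the setting of Glasner's original theorem, this shortcut is unavailable: proximal quasifactors can be nontrivial, and the finest one must be built by hand as the orbit closure of a canonical ``proximal cell'' (the closed set of images of a distinguished $u$-fixed point under the minimal idempotents of a fixed minimal right ideal), where the hard part is to verify that this orbit closure is itself proximal and that it refines every other proximal quasifactor.
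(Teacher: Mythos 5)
The paper does not actually prove this statement; it is recalled verbatim from Glasner's paper \cite{SG}, so there is no internal argument to compare yours against. That said, your proof is correct under the paper's standing hypothesis that $T$ is a discrete \emph{abelian} group: each step is licensed by facts the paper records --- proximality of $(B,tB)$ yields $p\in\beta T$ with $p\circ B=p\circ(tB)$, the circle operator is an action with $t\circ B=tB$, elements of $T$ are central in $\beta T$ (the paper itself writes $u\circ tB=ut\circ B=tu\circ B$), and the coincidence set $F_t$ is a closed invariant subset of the minimal flow $\Y$. What you are really running is the standard argument that a minimal proximal flow admits no nontrivial automorphisms, applied to the translations by $t\in T$; it collapses $\Y$ to a fixed point, which by minimality of $X$ must be $X$ itself. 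The trade-off is exactly the one you flag: this route proves the theorem only by rendering it vacuous in the abelian setting (the poset of proximal quasifactors is the singleton $\{(\{X\},T)\}$), whereas Glasner's theorem is stated and proved for arbitrary acting groups, where nontrivial proximal quasifactors exist and the finest one must be constructed explicitly (as the orbit closure of a canonical closed set built from a minimal right ideal of $\beta T$) and then shown to refine all others. So if the theorem is being quoted in the generality of \cite{SG}, your argument is not a proof of it; if it is being read under this paper's blanket abelian assumption, it is a complete --- if deflating --- proof, and it would be worth saying explicitly in the text that the statement trivializes there.
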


\section{Almost Periodic Sets for $(X,T)$}

Recall,

\begin{df}\cite{ AUS}
	For a  flow $(X,T)$, a set $A\subset X$ is said to be an \emph{almost periodic set} if any point $z\in X^{|A|}$ with
	$range(z)=A$ is an almost periodic point of $(X^{|A|},T)$, where $|A|$ is the cardinality of $A$.
\end{df}

The point $z\in X^{|A|}$ is thought of as $A$ ``spread out" to a point in the product space $ X^{|A|}$.

\begin{rem} Also from \cite{AUS} we observe that  for the flow $(X,T)$, and an almost periodic set $ A $
	in $ X $; by Zorn's lemma there is a maximal (with respect to inclusion) almost	periodic set $ B $ such that $ A \subset B $. \end{rem}

For any non-empty index set $\Lambda$ and the product system $(X^\Lambda , T)$ we can identify $\Delta E(X )^\Lambda$ with $E(X^\Lambda)$. Thus, we have for any $k$-tuple $(x_1, x_2, \ldots , x_k) \in X^k$ - the
pointed system 

$(\overline{T(x_1, x_2, \ldots , x_k)}, (x_1, x_2, \ldots , x_k), T)$ is a factor of $(E(X)^k, (\underbrace{e,e,\ldots,e)}_{k- \text{times}}, T)$.

\bigskip

This gives another characterization of almost periodic sets which can be taken as another definition:

Let $A\subset X$  be an \emph{almost periodic set}. Then  $z\in X^{|A|}$ with
$range(z)=A$ is an almost periodic point of $(X^{|A|},T)$, where $|A|$ is the cardinality of $A$. This gives a minimal idempotent  $\textbf{u} \in E(X^{|A|})$ such that $\textbf{u} z = z$. But $\textbf{u} = \underbrace{(u,u, \ldots)}_{|A|-times}$, and so we have $ua = a$, $\forall a \in A$. Thus,

\begin{lem} For the flow $(X,T)$, the following are equivalent:
	
	1. $A \subset X$ is an almost periodic set.
	
	2. There exists a minimal idempotent $u \in E(X)$ such that $ua = a, \ \forall a \in A$. \end{lem}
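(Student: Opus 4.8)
The plan is to prove the two implications separately, leaning on the identification $E(X^{|A|}) = \Delta E(X)^{|A|}$ (the diagonal of $E(X)$ in the product enveloping semigroup) that the paragraph preceding the statement has set up. The key structural fact I would isolate first is that an idempotent $\mathbf{u} \in E(X^{|A|})$ lying on the diagonal must have the form $\mathbf{u} = (u,u,\ldots,u)$ for a single $u \in E(X)$, and that $\mathbf{u}$ is a minimal idempotent in $E(X^{|A|})$ if and only if $u$ is a minimal idempotent in $E(X)$; this is because the minimal left ideals of the product semigroup restrict coordinatewise, and the factor map $E(X^{|A|}) \to E(X)$ onto any coordinate carries minimal idempotents to minimal idempotents.

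For the implication $(1) \Rightarrow (2)$, I would take $A \subset X$ to be an almost periodic set and choose any $z \in X^{|A|}$ with $\mathrm{range}(z) = A$. By the definition of almost periodic set, $z$ is an almost periodic point of $(X^{|A|}, T)$, so by Lemma \ref{O(x)} its orbit closure is minimal and there is a minimal idempotent $\mathbf{u} \in E(X^{|A|})$ with $\mathbf{u}z = z$. Writing $\mathbf{u} = (u,u,\ldots,u)$ via the diagonal identification, the equation $\mathbf{u}z = z$ read coordinatewise says $u z_\lambda = z_\lambda$ for every coordinate, and since the coordinates of $z$ range over exactly $A$, this gives $ua = a$ for all $a \in A$, with $u$ minimal in $E(X)$.

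The converse $(2) \Rightarrow (1)$ runs the same computation backwards: given a minimal idempotent $u \in E(X)$ fixing every point of $A$, set $\mathbf{u} = (u,u,\ldots,u)$, which is a minimal idempotent in $E(X^{|A|})$ by the structural fact above. For any $z$ with $\mathrm{range}(z) = A$, each coordinate $z_\lambda$ lies in $A$ and hence is fixed by $u$, so $\mathbf{u}z = z$; by the ``idempotent fixes the point'' criterion of Lemma \ref{O(x)}, $z$ is an almost periodic point of $(X^{|A|}, T)$, which is precisely the statement that $A$ is an almost periodic set.

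The main obstacle I anticipate is the bookkeeping around the minimality transfer between $E(X)$ and $E(X^{|A|})$, especially when $|A|$ is infinite: one must be careful that the diagonal $\Delta E(X)^{|A|}$ really is (isomorphic to) $E(X^{|A|})$ and that minimality of the diagonal idempotent is equivalent to minimality of its single coordinate value $u$. This is where I would invoke the product identification cited just before the statement, together with the fact that a coordinate projection is a semigroup homomorphism of enveloping semigroups, so that it preserves and reflects minimality of idempotents along the diagonal; once this transfer is secure, both implications are immediate coordinatewise readings of the fixed-point equation.
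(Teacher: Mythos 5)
Your proof is correct and follows the same route as the paper: both directions are the coordinatewise reading of $\mathbf{u}z=z$ under the identification $E(X^{|A|})\cong\Delta E(X)^{|A|}$, which is exactly the argument the paper sketches just before stating the lemma. Your extra care about why minimality of $\mathbf{u}$ in $E(X^{|A|})$ corresponds to minimality of $u$ in $E(X)$ is a point the paper takes for granted, and it is handled correctly since the diagonal identification is a semigroup isomorphism onto $E(X^{|A|})$.
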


\bigskip

 For each $u \in J(X)$, consider the set 

$$F_{u}=\lbrace x\in X: ux=x \rbrace$$

 Let $x\in X$  and $u \in J(X)$, then $uux=ux$, so $ux\in F_{u}$. So $F_{u} = uX$ is always non-empty. But there is more to $F_u$.
 
 \bigskip
 
 Recall the concept of quasi-order on $J(X)$ given in \cite{AF}. A \emph{quasi order} (a reflexive, transitive relation) ` $>$ ' in $J(X)$ is defined as
$ u > v$ if $uv = v$. If $u > v$ and $v > u$ we say that $u$ and $v$ are \emph{equivalent}   and write $u \sim v$. 

 An $w \in J(X)$ is called maximal if whenever $u \in J(X)$ with $u > w$, then
$w \sim u$. Minimal idempotents are defined similarly.  With respect to the quasi order ` $>$ ' $J(X)$ contains maximal and minimal
idempotents. If $v \in J(X)$, there are maximal and minimal idempotents $w$ and $u$ such that $w > v > u$. 

\vskip .5cm

Since our $T$ is a group, the maximal idempotent in $J(X)$ is $e \ -$ the identity in $T$.

\vskip .5cm

In fact,  the minimal idempotents are precisely those idempotents
which are in some minimal left ideal of $E(X)$. Let $ \K, \L $ and $ \M $ be minimal ideals of $E(X)$. Let $ v \in J(\M)$. Then there is a unique idempotent
$ v' \in J(\L) $ such that $ vv' = v' $ and $ v'v = v $. We 
say that $ v' $ is \emph{equivalent} to $ v $ and write $ v \sim v' $. If $ \acute{v} \in J(\K) $ is equivalent to $ v' $ then $  v \sim \acute{v} $. The map $  p \to pv' $ of $ \M $ onto $ \L $ is an isomorphism of flows.

\vskip .5cm

\begin{lem}
	Let $u,v \in J(X)$ be  such that $u > v$. Then $F_u \supset F_v$. 
	
	Further if $u \sim v$, then $F_u = F_v$.
\end{lem}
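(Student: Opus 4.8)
The plan is to argue directly from the definition of the quasi-order together with the fact that $E(X)$ acts on $X$ by composition of self-maps, so that the action is associative in the sense that $(pq)x = p(qx)$ for all $p,q \in E(X)$ and all $x \in X$. This associativity is what makes the whole statement a short algebraic computation, with no topological or minimality input required.

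First I would establish the inclusion $F_u \supset F_v$. Take any $x \in F_v$, i.e. $vx = x$. Since $u > v$ means precisely $uv = v$, I can compute
$$ux = u(vx) = (uv)x = vx = x,$$
so that $x \in F_u$. As $x \in F_v$ was arbitrary, this gives $F_v \subseteq F_u$, which is exactly the claimed $F_u \supset F_v$.

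For the second assertion, recall that $u \sim v$ means both $u > v$ and $v > u$, that is, $uv = v$ and $vu = u$. Applying the first part to the relation $u > v$ yields $F_u \supset F_v$, and applying the same reasoning to $v > u$ yields $F_v \supset F_u$. Combining the two inclusions gives $F_u = F_v$.

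I expect no genuine obstacle here. The only points requiring care are to keep the direction of the quasi-order straight, so that $u > v$ feeds the correct identity $uv = v$ into the computation, and to use that the product in $E(X)$ is honest composition of maps of $X$ into itself, which is what legitimizes the step $u(vx) = (uv)x$. The conclusion holds for arbitrary idempotents $u, v \in J(X)$ with $u > v$; minimality of $u$ or $v$ plays no role.
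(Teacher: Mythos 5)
Your proof is correct and follows essentially the same route as the paper: both arguments reduce to the single computation $ux = (uv)x = vx = x$ for $x \in F_v$, using the definition $u > v \Leftrightarrow uv = v$, and then apply the symmetric inclusion to get equality when $u \sim v$. No further comment is needed.
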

\begin{proof}
	The proof follows trivially since for $u > v$, $uvx = vx$ for all $x \in X$. 

	Also if $u \sim v$ then $vux = ux $ and $uvy = vy$ for all $x,y \in X$. 
\end{proof}

\begin{prop}
	For a flow $(X,T)$ let $u\in E(X)$ be a minimal idempotent. Then the set $F_{u}$ is an almost periodic set.
\end{prop}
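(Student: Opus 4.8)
The plan is to reduce the statement to the characterization of almost periodic sets proved just above, namely that a set is almost periodic precisely when it is fixed pointwise by a single minimal idempotent of $E(X)$. From this viewpoint the proposition is almost immediate: by the very definition $F_u = \{x \in X : ux = x\}$, every $x \in F_u$ satisfies $ux = x$, and $u$ is by hypothesis a minimal idempotent. So I would simply invoke that characterization with $A = F_u$ and the given $u$, which yields at once that $F_u$ is an almost periodic set. (Recall also that $F_u = uX \neq \emptyset$, so there is no degenerate case to dispose of.)

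If one prefers to argue directly from the original ``spread out'' definition, the route is as follows. I would fix an arbitrary $z \in X^{|F_u|}$ with range equal to $F_u$, so that each coordinate $z_\lambda$ lies in $F_u$. Using the identification $\Delta E(X)^{\Lambda} \cong E(X^{\Lambda})$ recalled earlier in this section, let $\textbf{u}$ denote the diagonal element $(u,u,\ldots)$ of $E(X^{|F_u|})$. Then $\textbf{u}\, z = (u z_\lambda)_\lambda = (z_\lambda)_\lambda = z$, since $u z_\lambda = z_\lambda$ for every coordinate. As $\textbf{u}$ is a minimal idempotent of $E(X^{|F_u|})$ fixing $z$, the point $z$ is almost periodic in $(X^{|F_u|}, T)$; since $z$ was an arbitrary point with range $F_u$, this is exactly the assertion that $F_u$ is an almost periodic set.

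The only point that requires any care — and it is a mild one — is that the diagonal lift $\textbf{u}$ of the minimal idempotent $u$ is itself a \emph{minimal} idempotent in $E(X^{|F_u|})$, and not merely an idempotent. This is precisely what the identification of $E(X^{\Lambda})$ with the diagonal $\Delta E(X)^{\Lambda}$ delivers: under that (semigroup) isomorphism, minimal idempotents of the product enveloping semigroup correspond exactly to diagonal copies of minimal idempotents of $E(X)$, so minimality of $u$ transfers to $\textbf{u}$. With this in hand there is no substantive obstacle, and the proposition follows directly from the characterization of almost periodic sets established above.
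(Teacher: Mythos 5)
Your proposal is correct and follows essentially the same route as the paper: spread $F_u$ out to a point $z\in X^{|F_u|}$, lift $u$ to the diagonal idempotent $\textbf{u}\in \Delta E(X)^{|F_u|}\cong E(X^{|F_u|})$, and observe $\textbf{u}z=z$. Your added remark that minimality of $u$ transfers to $\textbf{u}$ under the semigroup isomorphism is a point the paper leaves implicit, and is worth making explicit.
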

\begin{proof}
	Let $z\in X^{|F_{u}|}$ be any point such that $range(z)=F_{u}$.   Consider $\textbf{u}= (u, u, u, \ldots)\in \Delta E(X)^{|F_{u}|} \cong E(X^{|F_{u}|})$. Since $range(z)=F_{u}$, $\textbf{u}z=z$, i.e. $z$ is an almost periodic point in $X^{|F_{u}|}$. Hence $F_{u}$ is an almost periodic set.  
\end{proof}  

\begin{cor}
	Every almost periodic set is contained in some $F_{u}$ for some minimal idempotent $u\in E(X)$. In particular, every almost periodic set is of the form $uA$ for some $A \subseteq X$.
\end{cor}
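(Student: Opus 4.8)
The plan is to read off both assertions directly from the equivalence established in the preceding Lemma, together with the definition of $F_u$. Let $A \subset X$ be an almost periodic set. By that Lemma there is a minimal idempotent $u \in E(X)$ with $ua = a$ for every $a \in A$. Since $F_u = \{x \in X : ux = x\}$ by definition, the relation $ua = a$ for all $a \in A$ says exactly that each $a \in A$ lies in $F_u$; that is, $A \subseteq F_u$. This is the first assertion, the minimal idempotent $u$ being precisely the one furnished by the Lemma.

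For the final clause I would use the same relation once more. Applying $u$ pointwise gives $uA = \{ua : a \in A\} = \{a : a \in A\} = A$, so that $A = uA$. Thus $A$ is of the form $uB$ with the subset $B \subseteq X$ taken to be $A$ itself. Equivalently, since $F_u = uX$, the chain $A = uA \subseteq uX = F_u$ simultaneously recovers the containment of the first part, so the two statements are really two readings of the single identity $A = uA \subseteq F_u$.

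I expect no genuine obstacle here: the corollary is a formal consequence of the Lemma and the definition of $F_u$. The only points worth flagging are that the Lemma provides a \emph{minimal} idempotent, which is what lets one situate $A$ inside an $F_u$ that is itself an almost periodic set by the preceding Proposition and thereby keeps the statement within the regime of minimal idempotents; and that the identity $A = uA$ rests on $u$ fixing each \emph{individual} point of $A$, so that the image $\{ua : a \in A\}$ collapses to $A$ itself, rather than on mere setwise invariance of $A$.
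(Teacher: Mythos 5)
Your proof is correct and is exactly the argument the paper intends: the corollary is stated without proof as an immediate consequence of the preceding Lemma (almost periodic set $\Leftrightarrow$ some minimal idempotent $u$ fixes every point of $A$) together with the definition $F_u = \{x : ux = x\}$, which is precisely what you spell out. The observation that $A = uA$ because $u$ fixes each individual point, not merely the set, is the right reading of the final clause.
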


\begin{cor}
	Every maximal almost periodic set is of the form $F_{u}$ for some minimal idempotent $u\in E(X)$.
\end{cor}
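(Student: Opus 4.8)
The plan is to deduce this directly by chaining the two immediately preceding results against the definition of maximality. First I would take an arbitrary maximal almost periodic set $B \subseteq X$. By the preceding Corollary, every almost periodic set, and in particular $B$, is contained in $F_u$ for some minimal idempotent $u \in E(X)$; so I fix such a $u$ with $B \subseteq F_u$.

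Next I would invoke the preceding Proposition, which guarantees that $F_u$ is itself an almost periodic set whenever $u$ is a minimal idempotent. Thus $F_u$ is an almost periodic set containing $B$. Since $B$ is maximal with respect to inclusion among almost periodic sets, the containment $B \subseteq F_u$ together with the fact that $F_u$ is almost periodic forces $B = F_u$, which is exactly the asserted form.

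The hard part is essentially absent: all the genuine content lives in the Proposition (showing that each $F_u$ is almost periodic) and in the preceding Corollary (embedding an arbitrary almost periodic set into some $F_u$ via the minimal idempotent characterization $ua = a$ for all $a \in A$). This corollary is then a purely formal consequence. The only point requiring a moment's care is to confirm that the set $F_u$ supplied by the Corollary is a \emph{legitimate} almost periodic set, so that the maximality hypothesis on $B$ can be applied to the inclusion $B \subseteq F_u$; this legitimacy is precisely what the Proposition furnishes, so no further argument is needed.
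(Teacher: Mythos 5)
Your proof is correct and follows exactly the route the paper intends: the corollary is stated without proof precisely because it is the formal consequence of the preceding Proposition (each $F_u$ is almost periodic) and Corollary (every almost periodic set sits inside some $F_u$) combined with maximality under inclusion. Nothing is missing.
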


\begin{prop}
	For a flow $(X,T)$, let $u \in E(X)$ be a minimal idempotent, and let $x\in X$. Then there is an $x^{\prime}\in F_u \cap \overline{\cO(x)}$ such that $x$ and $x^{\prime}$ are proximal.
\end{prop}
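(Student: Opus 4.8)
The plan is to exhibit the point $x'$ explicitly rather than argue for its existence abstractly. The natural candidate is $x' = ux$, the image of $x$ under the minimal idempotent $u$ itself. I would then verify the three required properties in turn: that $x'$ lies in $F_u$, that $x'$ lies in $\overline{\cO(x)}$, and that $x$ and $x'$ are proximal.

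The first two properties are immediate. Since $u$ is idempotent, $u x' = u(ux) = u^2 x = ux = x'$, so $x' \in F_u$ by the very definition of $F_u$. For the second, Lemma \ref{O(x)} gives $\overline{\cO(x)} = E(X)x$, and since $u \in E(X)$ we obtain $x' = ux \in E(X)x = \overline{\cO(x)}$. Hence $x' \in F_u \cap \overline{\cO(x)}$, and in particular this intersection is nonempty.

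The crux is the proximality of $x$ and $x'$. I would unwind the definition: $x$ and $x'$ are proximal iff $\overline{\cO(x,x')}$ meets the diagonal in $X \times X$. Using the identification $E(X\times X) \cong \Delta E(X)^2$ recorded earlier and Lemma \ref{O(x)} applied to the flow $(X\times X, T)$, one has $\overline{\cO(x,x')} = E(X\times X)(x,x') = \{(px, px') : p \in E(X)\}$, so it suffices to produce a single $p \in E(X)$ with $px = px'$. Taking $p = u$ does the job: $ux = x'$, while $u x' = u(ux) = u^2 x = ux = x'$, so $ux = ux' = x'$ and the pair $(ux, ux') = (x', x')$ sits on the diagonal. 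Thus $\overline{\cO(x,x')}$ meets the diagonal and $x, x'$ are proximal.

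The only genuine subtlety, and the step I expect to require the most care, is the passage from the geometric definition of proximality (orbit closure meeting the diagonal) to the algebraic statement that a single semigroup element collapses the pair; this rests on identifying $E(X \times X)$ with $\Delta E(X)^2$ and on Lemma \ref{O(x)} in the product flow. Once the candidate $x' = ux$ is chosen, the observation that the \emph{same} idempotent $u$ simultaneously fixes $x'$ and collapses $x$ with $x'$ makes all three verifications essentially formal, so no further obstacle is anticipated.
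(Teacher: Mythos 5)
Your proposal is correct and follows exactly the paper's argument: take $x' = ux$, note $x' \in uX = F_u$ and $x' \in E(X)x = \overline{\cO(x)}$, and conclude proximality from $ux = x' = ux'$. You merely spell out in more detail the standard identification of proximality with a single element of $E(X)$ collapsing the pair, which the paper takes as known.
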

\begin{proof} Since $ux \in uX = F_u$, let $x' = ux$. Then, $x^{\prime}\in F_u \cap \overline{\cO(x)}$ and   $ux = x' = ux'$. Thus, $x$ and $x'$ are proximal.
\end{proof}

We recall Theorem \ref{fac-en}, using which we have:

\begin{prop}
	Let $\pi: (X,T)\longrightarrow (Y,T)$ be a factor map and $u \in E(X)$ be a minimal idempotent. Then $\Pi(u) \in E(Y)$ is a minimal idempotent with $F_{\Pi(u)} = \Pi(u)Y \subset Y$ such that $\pi(F_u) = F_{\Pi(u)}$.
\end{prop}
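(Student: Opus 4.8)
The plan is to verify the four assertions in turn—that $\Pi(u)$ is an idempotent, that it is a \emph{minimal} idempotent, that $F_{\Pi(u)} = \Pi(u)Y$, and that $\pi(F_u) = F_{\Pi(u)}$—leaning throughout on the multiplicativity $\Pi(pq)=\Pi(p)\Pi(q)$ and the intertwining relation $\pi(px)=\Pi(p)\pi(x)$ supplied by Theorem \ref{fac-en}, together with the surjectivity of both $\pi$ (it is a factor map) and $\Pi$. The first assertion is immediate: since $u^2=u$, multiplicativity gives $\Pi(u)\Pi(u)=\Pi(u^2)=\Pi(u)$, so $\Pi(u)$ is an idempotent in $E(Y)$.

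The substantive step is minimality, and I would argue it via ideals. By the characterization recalled just before this proposition, $u$ being a minimal idempotent means $u$ lies in some minimal left ideal $L$ of $E(X)$. I would then show that $\Pi(L)$ is a minimal left ideal of $E(Y)$. For any $\ell\in L$, the set $E(X)\ell$ is a nonempty left ideal contained in $L$ (nonempty because $e\in E(X)$ gives $\ell=e\ell\in E(X)\ell$), so minimality of $L$ forces $E(X)\ell=L$. Applying $\Pi$ and using its surjectivity, $\Pi(L)=\Pi(E(X)\ell)=\Pi(E(X))\Pi(\ell)=E(Y)\Pi(\ell)$. Thus every element of $\Pi(L)$ generates all of $\Pi(L)$ as a left ideal of $E(Y)$, which is exactly the statement that $\Pi(L)$ is a minimal left ideal. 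Since $\Pi(u)\in\Pi(L)$ is an idempotent sitting inside a minimal left ideal, it is a minimal idempotent.

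The identity $F_{\Pi(u)}=\Pi(u)Y$ requires no new work: it is the general fact $F_v=vX$ for an idempotent $v$ (established earlier for $F_u$, where $v(vx)=vx$ gives $vX\subseteq F_v$ and $y=vy$ gives the reverse), applied verbatim to the idempotent $\Pi(u)\in E(Y)$. For the last assertion I would compute directly, using $F_u=uX$ together with the intertwining relation and the surjectivity of $\pi$: one finds $\pi(F_u)=\pi(uX)=\{\pi(ux):x\in X\}=\{\Pi(u)\pi(x):x\in X\}=\Pi(u)\pi(X)=\Pi(u)Y=F_{\Pi(u)}$.

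The only place calling for genuine argument is the minimality of $\Pi(u)$—that the image of a minimal left ideal under the surjective homomorphism $\Pi$ is again a minimal left ideal; once that is in hand, the idempotency and the two set identities are purely formal consequences of the relations in Theorem \ref{fac-en} and the surjectivity of $\pi$.
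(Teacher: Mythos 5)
Your proof is correct and rests on the same ingredients as the paper's, namely the multiplicativity and intertwining relations of Theorem \ref{fac-en} together with surjectivity of $\pi$ and $\Pi$; the paper's own proof is just a two-line appeal to \cite{ELL} and Theorem \ref{fac-en}. The one place you go beyond the paper is in actually proving minimality of $\Pi(u)$ via the observation that $\Pi(L)=E(Y)\Pi(\ell)$ for every $\ell$ in the minimal left ideal $L$ containing $u$ — an argument the paper omits entirely — and that argument is sound.
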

\begin{proof}
	It is known that for idempotent $u \in E(X)$, $\Pi(u) \in E(Y)$ will also be an idempotent \cite{ELL}. Thus, by Thoerem \ref{fac-en} it follows that $\pi(F_u) = F_{\Pi(u)}$.
\end{proof}

\begin{cor}
	Let $\pi: (X,T)\longrightarrow (Y,T)$ be a factor map, $u \in E(X)$ be a minimal idempotent and $A \subset F_u  \subset X$. Then  $A = uA$ is an  almost periodic set in $X$, and $\pi(A)$ is an  almost periodic set in $Y$ with $\pi(A) = \Pi(u)\pi(A)$ for $\Pi(u) \in E(Y)$.
\end{cor}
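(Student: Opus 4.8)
The plan is to reduce everything to the characterization of almost periodic sets by minimal idempotents, combined with the intertwining relation of Theorem~\ref{fac-en}. First I would read off the hypothesis $A \subset F_u$ as the statement that $ua = a$ for every $a \in A$; consequently $uA = \{ua : a \in A\} = A$, which settles the identity $A = uA$. Since $u$ is a minimal idempotent fixing each point of $A$, the equivalence lemma characterizing almost periodic sets (a set is almost periodic precisely when a minimal idempotent fixes it pointwise) immediately yields that $A$ is an almost periodic set in $X$.

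For the image in $Y$, I would apply the intertwining relation $\pi(px) = \Pi(p)\pi(x)$ from Theorem~\ref{fac-en} with $p = u$. For each $a \in A$ this gives $\pi(ua) = \Pi(u)\pi(a)$; but $ua = a$ because $a \in F_u$, so $\pi(a) = \Pi(u)\pi(a)$. Hence $\Pi(u)$ fixes every point of $\pi(A)$, which gives the set equality $\Pi(u)\pi(A) = \pi(A)$. Invoking the preceding proposition, $\Pi(u)$ is itself a \emph{minimal} idempotent in $E(Y)$, so applying the same characterization lemma now in $(Y,T)$ shows that $\pi(A)$ is an almost periodic set in $Y$.

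The argument is essentially bookkeeping, and I do not anticipate any serious obstacle once the earlier results are in place. The single point that genuinely relies on the prior work is the minimality of $\Pi(u)$: the characterization lemma demands a minimal idempotent, not merely an idempotent, so the conclusion in $Y$ rests on the previous proposition guaranteeing that $\Pi$ carries minimal idempotents to minimal idempotents. With that secured, the passage from $\pi(ua) = \Pi(u)\pi(a)$ to $\pi(a) = \Pi(u)\pi(a)$, and thence to $\Pi(u)\pi(A) = \pi(A)$, is immediate.
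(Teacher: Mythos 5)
Your proposal is correct and follows exactly the route the paper intends: the paper states this corollary without proof, deriving it implicitly from the preceding proposition (that $\Pi(u)$ is a minimal idempotent with $\pi(F_u)=F_{\Pi(u)}$) together with the lemma characterizing almost periodic sets as those fixed pointwise by a minimal idempotent. Your observation that the minimality of $\Pi(u)$ is the one point genuinely requiring the prior proposition is exactly right.
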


\vskip .5cm

Suppose there is a distal point $x\in X$.  So $ux=x$ for all  idempotents $u$. So for any minimal idempotent $u$, $F_{u}\neq \emptyset$ and $x\in \bigcap F_{u}$. 

This leads to an interesting observation $-$ If every $x \in X$ is distal i.e. the system $(X,T)$ is distal, then   $X = \bigcap \limits_u F_{u}$ thus $x = ux$ for all $x \in X$ and every  idempotent $u$. This means that all  idempotents coincide on $X$ and so $E(X)$ has a unique  idempotent. This further gives a unique minimal ideal in $E(X)$ implying that $E(X)$ itself is minimal and this unique idempotent must be identity - thus proving \emph{Ellis' theorem}. 

\bigskip

We look into this study via an example:

\begin{ex} \label{MT}
	We look into a substitution system,  the square of the Morse-Thue substitution as considered in \cite{HJ, S}. This is a continuous substitution $Q$ defined by the rule
	$$Q(0) = 0110, \ Q(1) = 1001.$$
	
	We have four bi-infinite sequences that serve as fixed points of $Q$,
	
	\begin{align*}
	a = \ldots 1001.1001 \ldots \\
	b = \ldots 0110.1001 \ldots \\
	\bar{a} = \ldots 0110.0110 \ldots\\
	\bar{b} = \ldots 1001.0110 \ldots \\
	\end{align*}

where $\bar{y}$ denotes the \emph{dual} of $y$ in $\{0,1\}^{\Z}$.

If $x$ denotes any one of the fixed points of $Q$, then $X = \overline{\cO(x)}$ can be defined uniquely for any $x \in \{a,b,\bar{a}, \bar{b}\}$. The system $(X, \sigma)$ is a minimal subsystem of the \emph{$\emph{2}-$shift}.

Following the calculations in \cite{HJ, S}, here $E(X)$ has exactly four minimal  idempotents $u_1, v_1, u_2, v_2$ such that they  are the identity off the orbits of $a,b,\bar{a}, \bar{b}$ and on the orbits of $a,b,\bar{a}, \bar{b}$ are defined as:

\begin{align*}
u_1: a \to b ; \ \bar{a} \to \bar{b} ; \ b \to b ; \ \bar{b} \to \bar{b}\\
v_1: a \to \bar{b} ; \ \bar{a} \to b ; \ b \to b ; \ \bar{b} \to \bar{b}\\
u_2:  a \to a ; \ \bar{a} \to \bar{a} ; \ b \to a ; \ \bar{b} \to \bar{a}\\
v_2: a \to a ; \ \bar{a} \to \bar{a} ; \ b \to \bar{a} ; \ \bar{b} \to a\\ 
\end{align*}

Note that, $u_1v_1 = v_1$, $v_1u_1 = u_1$, $u_2v_2 = v_2$, and $v_2u_2 = u_2$. Thus, $u_1 \sim v_1$ and $u_2 \sim v_2$, and $E(X)$ here has two minimal ideals $\I, \K$ such that $u_1, u_2 \in \I$ and $v_1, v_2 \in \K$. Notice,

\begin{align*}
a, \bar{a} \notin F_{u_1} = F_{v_1} \ni b, \bar{b}\\
 b, \bar{b} \notin F_{u_2} = F_{v_2} \ni a, \bar{a}\\
\end{align*}
	
and $F_{u_1} \cap F_{u_2}$ consists of all the points off the orbits of $a,b,\bar{a}, \bar{b}$, which are precisely the distal points in $(X,\sigma)$.\end{ex}

\begin{rem} \label{remark}
	We note that for $u \sim v \in E(X)$ and $A \in 2^X$, $uA$ need not be equal to $vA$. As can be seen in the example above where $u_1 \sim v_1$ and for $A = \{a,b\} \in 2^X$, $u_1(A) = \{b\} \neq \{b,\bar{b}\} = v_1(A)$.
\end{rem}

What happens when the flow is proximal? As noted in \cite{SGP}, $ P(X) = X \times X$, and so $P(X)$ is an equivalence relation and hence $E(X)$ contains a unique minimal ideal $ \I $. For any $x \in X$ and $u \in J(\I)$, $x = ux$ i.e. $F_u = X$ for all minimal idempotents $u \in E(X)$.

\section{ Prolongations on $2^X $ and Circle Operator }

We recall Auslander's \emph{prolongation relation} $\D$ on $X \times X$:

$$ (x,y) \in \D \Leftrightarrow y \in \D(x)$$

where $\D(x)$ can be thought of as a function $ X \to 2^X$ defined as

\begin{align*}
\D(x) = \bigcap \{\overline{TU}: U \subset X \  \  \text{open with} \  \ x \in U \}.\\
= \lbrace y \in X: \exists\ \text{nets} \ \lbrace x_{i}\rbrace \ \text{with} \ x_i \to x, \ \text{and} \ \lbrace t_{i} \rbrace \ \text{in} \ T \  \ \text{such that}  \ t_{i}x_{i}\rightarrow y\rbrace.\\
\end{align*}

The relation $\D = \overline{\{(x,tx): x \in X, t \in T\}}$ is known to be closed, reflexive and symmetric. And for $A \subset X$, $\D(A) = \bigcup \limits_{a \in A} \D(a)$, and $\D(tA) = \D(A) \ \forall \ t \in T$.

Note that $\D(x) \neq \overline{Tx}$ always, and $\D$ is not always an equivalence relation. Conditions when both these assertions hold are studied in \cite{eAeG}.

\bigskip

By interchanging the role of nets in $X$ and $T$, we consider a variation of this relation:

\smallskip

 For $\alpha \in X^X$ define
 
 \begin{center}
 	\begin{tabular}{|c|}
 		\hline
 		$\D_\alpha(x) = \lbrace y \in X: \exists\ \text{nets} \ \lbrace x_{i}\rbrace \ \text{with} \ x_i \to x, \ \text{and} \ \lbrace t_{i} \rbrace \ \text{in} \ T \ \text{with} \ t_i \to \alpha  \ \text{such that}  \ t_{i}x_{i}\rightarrow y\rbrace.$\\
 		\hline
 	\end{tabular}
 \end{center}

\smallskip

We note that for $\alpha \notin E(X)$, $\D_\alpha(x) = \emptyset \ \forall \ x \in X$. Thus, $\D_p(x) \neq \emptyset \ \forall \ x \in X$, if and only if $p  \in E(X)$. 

\medskip

Note that for $p \in E(X)$,  $px \in \D_p(x), \ \forall \ x \in X$. This $\D_p$ is also a function from $ X  \to 2^X \ \forall \ p \in E(X)$.  We call this function  $\D_p$ - \emph{prolongation along $p$}.

\begin{thm} \label{3.1}
	$\bigcup \limits_{p \in E(X)} \D_p(x) = \ \D(x), \ \forall \ x \in X$.
\end{thm}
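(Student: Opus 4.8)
The plan is to establish the two inclusions separately. The inclusion $\supseteq$ will be immediate from the definitions, while the inclusion $\subseteq$ will rest on a single compactness argument in $X^X$ together with the fact that $E(X)$ is the closure of $T$ there.

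First I would dispatch $\bigcup_{p \in E(X)} \D_p(x) \subseteq \D(x)$. This is essentially definitional: the condition defining membership in $\D_p(x)$ is strictly stronger than the one defining $\D(x)$, as it imposes the additional requirement $t_i \to p$ while leaving the requirements $x_i \to x$ and $t_i x_i \to y$ unchanged. Hence $\D_p(x) \subseteq \D(x)$ for every $p \in E(X)$, and the union over $p$ is contained in $\D(x)$.

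For the reverse inclusion, I would take $y \in \D(x)$, so that there are nets $\{x_i\}$ with $x_i \to x$ and $\{t_i\}$ in $T$ with $t_i x_i \to y$, indexed over a common directed set. Viewing each $t_i$ as the map $(x \mapsto t_i x)$ inside the compact space $X^X$, Tychonoff's theorem yields a subnet $\{t_{i_\lambda}\}$ converging in $X^X$ to some element $p$. Since $E(X)$ is by definition the closure of $T$ in $X^X$, this limit satisfies $p \in E(X)$; moreover the convergence $t_{i_\lambda} \to p$ is exactly the pointwise convergence used in the definition of $\D_p$, as noted in the remark preceding the statement that this convergence depends only on the action of $T$ on $X$. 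Passing to the subnet preserves $x_{i_\lambda} \to x$ and $t_{i_\lambda} x_{i_\lambda} \to y$, so the nets $\{x_{i_\lambda}\}$ and $\{t_{i_\lambda}\}$ witness $y \in \D_p(x)$, whence $y \in \bigcup_{p \in E(X)} \D_p(x)$.

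The only genuine content is the extraction of a convergent subnet of $\{t_i\}$, which is a routine consequence of the compactness of $X^X$; the one point to state carefully is that this subnet limit automatically lands in $E(X)$ rather than merely in $X^X$, which is precisely the defining property of the enveloping semigroup. I therefore do not expect a real obstacle, and the argument should reduce to bookkeeping about subnets.
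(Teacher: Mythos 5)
Your proposal is correct and follows exactly the paper's argument: one inclusion is definitional, and the other is obtained by passing to a subnet of $\{t_i\}$ that converges in the compact space $X^X$ to some $p$, which necessarily lies in $E(X)$ since $E(X)$ is the closure of $T$ there. You merely spell out the compactness step more explicitly than the paper does.
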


\begin{proof}
	It is a simple observation that $\D_p(x) \subset \D(x) \ \forall p \in E(X)$. The converse follows with the observation that for $y \in \D(x)$ and the net $\lbrace t_n \rbrace$ in $T$, if necessary by passing to a subnet, there will exist a $p \in E(X)$ for which  $t_n \to p$. So for net $\lbrace x_n \rbrace$ for which $x_n \to x$ and a net $\lbrace t_n \rbrace$ in $T$,  $t_nx_n \to y$ with $t_n \to p$. Thus, $y \in \D_p(x)  \subset \bigcup \limits_{p \in E(X)} \D_p(x)$. 	\end{proof}

\bigskip

We skip the trivial proof of the below lemma:

\begin{lem}
	For  $t \in T$ and $p, q \in E(X)$ the following holds:
	
	1. $\D_p(tx) = t \D_p(x)$.
	
	2. $\D_{tp}(x) = t \D_p(x)$.
\end{lem}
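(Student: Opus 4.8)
The plan is to derive both identities directly from the two structural facts available in this setting: each $t \in T$ acts on $X$ as a homeomorphism, and $T$ is abelian. The key technical point, which I would isolate first, is the behaviour of the convergence $t_i \to \alpha$ (which, as noted earlier, means convergence in $X^X$, i.e.\ $t_i z \to \alpha z$ for every $z \in X$) under multiplication by a fixed group element. Concretely, I would prove the auxiliary equivalence: for a net $\{t_i\}$ in $T$ and $p \in E(X)$, one has $t_i \to tp$ if and only if $t^{-1} t_i \to p$. This is immediate, since $(tp)z = t(pz)$ and $t$ is a homeomorphism, so $t_i z \to t(pz)$ holds for all $z$ exactly when $(t^{-1} t_i) z = t^{-1}(t_i z) \to pz$ holds for all $z$.

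For part 1, I take $y \in \D_p(tx)$, so there are nets $x_i \to tx$ and $t_i \to p$ in $T$ with $t_i x_i \to y$. Setting $w_i = t^{-1} x_i$ gives $w_i \to x$ by continuity of $t^{-1}$, and using commutativity of $T$, $t_i x_i = t_i t\, w_i = t(t_i w_i)$. Since $t$ is a homeomorphism, $t_i x_i \to y$ is equivalent to $t_i w_i \to t^{-1} y$; together with $w_i \to x$ and $t_i \to p$ this says precisely that $t^{-1} y \in \D_p(x)$, i.e.\ $y \in t\,\D_p(x)$. Every step is reversible, which gives the reverse inclusion and hence equality.

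For part 2, I take $y \in \D_{tp}(x)$, so there are nets $x_i \to x$ and $t_i \to tp$ with $t_i x_i \to y$. By the auxiliary equivalence, $s_i = t^{-1} t_i$ is a net in $T$ with $s_i \to p$, and $t_i x_i = t(s_i x_i)$, so $s_i x_i \to t^{-1} y$. Combined with $x_i \to x$ this shows $t^{-1} y \in \D_p(x)$, i.e.\ $y \in t\,\D_p(x)$; reversing the argument (starting from a net $s_i \to p$ and setting $t_i = t s_i \to tp$) yields the opposite inclusion. Note that part 2 and the auxiliary equivalence use only associativity of the action, while the commutativity of $T$ enters only in part 1.

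The only genuine subtlety, which I would flag as the main obstacle, is the correct interpretation and handling of the convergence $t_i \to p$, which lives in $X^X$ through the action rather than in $\beta T$; once the auxiliary equivalence above is established, both identities reduce to elementary net manipulations using that the fixed homeomorphism $t$ intertwines the limits. I would take care to perform the change of variable on the correct net — the $X$-net in part 1 and the $T$-net in part 2 — so that in each case the remaining data match the definition of $\D_p(x)$ verbatim.
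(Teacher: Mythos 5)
Your proof is correct, and it is precisely the argument the paper intends: the paper omits the proof as trivial, and your net manipulations (a change of variable on the $X$-net in part 1, using commutativity of $T$, and on the $T$-net in part 2, using that the fixed $t$ acts as a homeomorphism so that $t_i \to tp$ in $X^X$ iff $t^{-1}t_i \to p$) are the standard way to fill it in. Your care in interpreting $t_i \to p$ as pointwise convergence in $X^X$ matches the paper's own remark on this point, and no gaps remain.
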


Again for equicontinuous $(X,T)$, $ E(X)$ is a topological group and so $t_n \to p \Leftrightarrow t_n^{-1} \to p^{-1}$, which gives 

\begin{lem}
	For equicontinuous $(X,T)$, $y \in \D_p(x) \Leftrightarrow x \in \D_{p^{-1}}(y)$.
\end{lem}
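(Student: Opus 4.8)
The plan is to exploit the single structural fact recorded just before the statement: for equicontinuous $(X,T)$ the enveloping semigroup $E(X)$ is a compact Hausdorff topological group, so inversion is continuous and hence, for a net $\{t_i\}$ in $T$, one has $t_i \to p$ in $E(X)$ if and only if $t_i^{-1} \to p^{-1}$. Here I use that $T$ is a group, so that the inverse $t_i^{-1}$ taken in $T$ agrees with the group inverse of $t_i$ inside $E(X)$; in particular $p^{-1}$ makes sense and $\D_{p^{-1}}$ is the prolongation along a genuine element of $E(X)$. Everything else is a direct manipulation of the witnessing nets, reusing them rather than extracting subnets.

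For the forward implication, suppose $y \in \D_p(x)$. By definition there are nets $\{x_i\}$ in $X$ with $x_i \to x$ and $\{t_i\}$ in $T$ with $t_i \to p$ such that $t_i x_i \to y$. I would now set $y_i = t_i x_i$ and $s_i = t_i^{-1}$. Then $y_i \to y$ by hypothesis, and $s_i \to p^{-1}$ by the continuity of inversion in the topological group $E(X)$. Since each $t_i$ acts as a homeomorphism of $X$ with inverse $t_i^{-1}$, we obtain $s_i y_i = t_i^{-1}(t_i x_i) = x_i \to x$. Thus the nets $\{y_i\}$ in $X$ (converging to $y$) and $\{s_i\}$ in $T$ (converging to $p^{-1}$) witness $x \in \D_{p^{-1}}(y)$.

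For the converse, I would simply invoke symmetry: the forward implication applied with $p$ replaced by $p^{-1}$ gives $x \in \D_{p^{-1}}(y) \Rightarrow y \in \D_{(p^{-1})^{-1}}(x) = \D_p(x)$, using $(p^{-1})^{-1} = p$ in the group $E(X)$. The only point requiring any care, and the sole place the equicontinuity hypothesis enters, is the appeal to the topological-group structure of $E(X)$, which guarantees both the existence of $p^{-1}$ and the equivalence $t_i \to p \Leftrightarrow t_i^{-1} \to p^{-1}$; the remaining steps are formal, since the identity $s_i y_i = x_i$ holds on the nose and no passage to a subnet is needed.
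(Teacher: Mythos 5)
Your proof is correct and takes essentially the same route as the paper, which offers no separate proof but derives the lemma directly from the remark that for equicontinuous $(X,T)$ the enveloping semigroup $E(X)$ is a topological group, so $t_i \to p \Leftrightarrow t_i^{-1} \to p^{-1}$. Your write-up merely makes the witnessing-net bookkeeping ($y_i = t_i x_i$, $s_i = t_i^{-1}$, $s_i y_i = x_i$) and the symmetry argument for the converse explicit, which is exactly what the paper leaves implicit.
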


\bigskip

We are mainly interested in the same \emph{prolongation along $p$}, $\D_p$ defined on  $2^X$, as a variation of the definition of prolongation. For $p \in E(X)$ and $A \in 2^X$ define,

\bigskip

\begin{tabular}{|c|}
	\hline
	$\D_p(A) = \lbrace y \in X: \exists\ \text{nets} \ \lbrace a_{i}\rbrace \ \text{in} \ A, \ \text{and} \ \lbrace t_{i} \rbrace \ \text{in} \ T \ \text{with} \ t_i \to p  \ \text{such that}  \ t_{i}a_{i}\rightarrow y\rbrace.$\\
	\hline
\end{tabular}

\smallskip

\begin{prop}
	 For every $p\in E(X)$, $\D_p(A)$ is closed $\forall A \in 2^X$ and so $\D_p: 2^X \to 2^X$ is a function.
\end{prop}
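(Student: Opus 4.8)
The plan is to show that $\D_p(A)$ contains all its limit points, i.e. that it is sequentially closed under the net convergence available in a compact metric space. So I would fix $p \in E(X)$ and $A \in 2^X$, take a net $\{y_j\}$ in $\D_p(A)$ with $y_j \to y \in X$, and produce the witnessing nets showing $y \in \D_p(A)$.

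First I would unpack the definition: for each $j$, since $y_j \in \D_p(A)$, there exist nets $\{a_i^{(j)}\}$ in $A$ and $\{t_i^{(j)}\}$ in $T$ with $t_i^{(j)} \to p$ (in the sense that $t_i^{(j)}$ converges to $p$ as an element of $E(X)$, which by the boxed remark in the excerpt means convergence of the action on $X$) such that $t_i^{(j)} a_i^{(j)} \to y_j$. The cleanest route is a diagonal argument. Fix a metric $d$ on $X$ and a compatible metric on $E(X) \subseteq X^X$ (or just work with the topology directly via neighborhoods). For each $j$, choose a single index $i(j)$ so that the element $s_j := t_{i(j)}^{(j)} \in T$ and the point $b_j := a_{i(j)}^{(j)} \in A$ satisfy both $d(s_j b_j, y_j) < \varepsilon_j$ and that $s_j$ is within $\varepsilon_j$ of $p$ in the enveloping-semigroup sense (i.e. $d(s_j x, p x)$ is small on a prescribed finite set of test points), where $\varepsilon_j \to 0$ along the net. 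The key point enabling this selection is precisely that each $t_i^{(j)} \to p$, so for every target accuracy such an $i(j)$ exists.

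Then I would assemble the result: the net $\{s_j\}$ in $T$ satisfies $s_j \to p$, the net $\{b_j\}$ lies in $A$, and $d(s_j b_j, y) \le d(s_j b_j, y_j) + d(y_j, y) \to 0$, so $s_j b_j \to y$. This exhibits $y$ as a point of $\D_p(A)$, giving closedness; since $X$ is compact and $\D_p(A) \subseteq X$, it is then automatically compact, so $\D_p: 2^X \to 2^X$ is well defined (one should also note $\D_p(A) \neq \emptyset$, which follows since $pa \in \D_p(A)$ for any $a \in A$, analogous to the earlier remark that $px \in \D_p(x)$).

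The main obstacle is the simultaneous control of two convergences indexed by a double net: I need $s_j$ close to $p$ and $s_j b_j$ close to $y_j$ at the same index. In a general net (not a sequence) a literal diagonal extraction is not available, so the honest formulation is to reindex over the directed set of triples $(\text{neighborhood of } p, \text{neighborhood of } y, \text{tail index})$ and choose a witness for each such triple. Making this cofinal selection rigorous, rather than pretending the nets are sequences, is the delicate part; everything else is the triangle inequality. One subtlety worth flagging is that ``$s_j \to p$'' must be interpreted in the action topology as emphasized in the excerpt, not in $\beta T$, but since $\D_p$ is defined via that same action this is consistent and causes no difficulty.
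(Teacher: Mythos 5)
Your argument is correct and is essentially the paper's own proof: the paper also takes $z \in \overline{\D_p(A)}$, indexes simultaneously over a neighbourhood base $\{B_\lambda\}$ at $z$ and a neighbourhood base $\{\cN_\lambda\}$ at $p$, and selects one witness $t_\lambda a_\lambda \in B_\lambda$ with $t_\lambda \in \cN_\lambda$ per index --- exactly the reindexed ``diagonal over the directed set of neighbourhood pairs'' that you describe as the honest formulation. Your additional remarks (nonemptiness via $pa \in \D_p(A)$, and interpreting $t_i \to p$ in the action topology on $X^X$) are consistent with the paper and require no change.
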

\begin{proof}
	Let $z \in \overline{\D_p(A)}$ for some $A \in 2^X$, and let $\{B_\lambda\}$ be a neighbourhood base at $z$. Then, for each $\lambda$, $B_\lambda \cap \D_p(A) \neq \emptyset$. Hence there exists a neighbourhood base $\{\cN_\lambda\}$ at $p$ such that $t_\lambda a_\lambda \in B_\lambda$ for some $t_\lambda \in \cN_\lambda$ and $a_\lambda \in A$.
	
	Then  for $\{a_\lambda\}$ in $A$ and  $\{t_\lambda\}$ in $T$, $t_\lambda \to p$, with $t_\lambda a_\lambda \to z$ i.e. $z \in \D_p(A)$.
\end{proof}

Since $tX = X$ for every $t \in T$, we have vacuously,

\begin{lem} \label{X}
	For every $p \in E(X)$, $\D_p(X) = X$.
\end{lem}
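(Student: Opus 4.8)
The plan is to prove the two inclusions separately; the containment $\D_p(X) \subseteq X$ is immediate from the definition, since $\D_p(A)$ is by construction a subset of $X$ for every $A \in 2^X$. All the content is in the reverse inclusion $X \subseteq \D_p(X)$, and the point of the lemma (as the word ``vacuously'' signals) is that it is essentially automatic.

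For the reverse inclusion I would first use that $p \in E(X)$. Recall that $E(X)$ is the closure of $T$ in $X^X$, where the convergence $t_i \to p$ means precisely $t_i x \to px$ for all $x \in X$; consequently there exists a net $\{t_i\}$ in $T$ with $t_i \to p$. Fix such a net once and for all. Now let $y \in X$ be arbitrary. Since $T$ is a group acting on $X$, each $t_i$ acts as a homeomorphism of $X$, so $t_i X = X$; in particular each $t_i$ is surjective, and we may set $a_i = t_i^{-1} y \in X$, so that $t_i a_i = y$ for every index $i$. The net $\{a_i\}$ lies in $X$, we have $t_i \to p$, and $t_i a_i = y \to y$ trivially (the sequence of values is constantly $y$). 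By the defining description of $\D_p$ applied to $A = X$, this exhibits $y$ as an element of $\D_p(X)$. As $y \in X$ was arbitrary, $X \subseteq \D_p(X)$, and combining with the first inclusion gives $\D_p(X) = X$.

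There is no genuine obstacle in this argument. The only two facts that must be checked are that a net $t_i \to p$ exists, which is guaranteed by $p \in E(X)$, and that each $t_i$ is onto, which is guaranteed by $T$ being a group so that $t_i X = X$. Because the chosen points $a_i = t_i^{-1}y$ force $t_i a_i$ to equal $y$ identically, the required convergence $t_i a_i \to y$ needs no limiting argument at all, which is exactly why the statement holds ``vacuously.''
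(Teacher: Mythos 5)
Your proof is correct and matches the paper's reasoning: the paper offers no written proof beyond the remark that ``$tX = X$ for every $t \in T$'' makes the lemma hold vacuously, and your argument simply makes that explicit by choosing $a_i = t_i^{-1}y$ along a net $t_i \to p$ so that $t_i a_i$ is constantly $y$.
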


\begin{lem}
	Let $v \in E(X)$ be an idempotent, then for $A \in 2^X$, $vA \subset \D_v(A)$ i.e. $F_v \cap \D_v(A) \neq \emptyset$.
\end{lem}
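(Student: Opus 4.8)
The plan is to establish the containment $vA \subset \D_v(A)$ directly from the definitions and then to observe that $vA$ already lies inside $F_v$, so that the non-emptiness of the intersection follows for free. Since $A \in 2^X$ is by assumption nonempty, this will give everything at once.

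First I would prove $vA \subset \D_v(A)$. Fix $a \in A$ and recall that $va = \lim t_i a$ for some net $\{t_i\}$ in $T$ with $t_i \to v$. Taking the \emph{constant} net $a_i = a$ in $A$ together with this same net $\{t_i\}$, we obtain $t_i a_i = t_i a \to va$, which is precisely the condition defining membership $va \in \D_v(A)$. As $a \in A$ was arbitrary, this yields $vA \subset \D_v(A)$.

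Next I would note that $vA \subset F_v$. Indeed, because $v$ is an idempotent in the enveloping semigroup, associativity of the $E(X)$-action gives $v(va) = (vv)a = va$ for every $a \in A$; hence each $va$ satisfies $v(va)=va$ and so lies in $F_v$. Combining the two containments produces $vA \subset F_v \cap \D_v(A)$, and since $A$ is a nonempty closed set we have $vA \neq \emptyset$, whence $F_v \cap \D_v(A) \neq \emptyset$.

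I do not anticipate a genuine obstacle here: every step is a direct unwinding of the definition of $\D_v$, the definition of $F_v$, and the semigroup structure of $E(X)$. The one point meriting a moment's care is the constant-net argument, where one must invoke that the convergence $t_i \to v$ is exactly the convergence witnessing $t_i a \to va$ in $X$ (as stressed in the earlier remark that $t_i \to p$ is governed by the action of $T$ on $X$), so that a single net $\{t_i\}$ simultaneously realizes $va$ both as the value of the action and as a limit of the form $t_i a_i$.
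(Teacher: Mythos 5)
Your proof is correct and is exactly the argument the paper has in mind: the paper omits a proof of this lemma as trivial, having already noted in the preliminaries that $pa=\lim t_ia$ gives $pA\subset p\circ A$ (your constant-net step) and that $uux=ux$ gives $uX=F_u$ (your second containment). Nothing is missing.
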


\begin{rem} \label{contain} Recall Example \ref{MT} and the points $a,b,\bar{a}, \bar{b} \in X$ there. Also recall the minimal idempotents $u_1, v_1, u_2, v_2 \in E(X)$ there and that $a, \bar{a} \notin F_{u_1}$.  But by Lemma \ref{X}, $\D_{u_1}(X) = X$ and so the inclusion in the above Corollary is usually strict.
	
	Note that for nondistal $(X,T)$, $uX \neq X$ for minimal idempotent $u \in E(X)$ and so in this case $uX \neq \D_u(X)$.
\end{rem}

\begin{lem}
	For $t \in T$ and $A \in 2^X$, $\D_t(A) = tA$.
\end{lem}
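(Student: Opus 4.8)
The plan is to prove the two inclusions $tA \subseteq \D_t(A)$ and $\D_t(A) \subseteq tA$ separately. The first is immediate: fixing $a \in A$ and feeding the constant nets $a_i \equiv a$, $t_i \equiv t$ into the definition of $\D_t$, one has $t_i \to t$ trivially and $t_i a_i = ta \to ta$, so $ta \in \D_t(A)$; since $a$ was arbitrary, $tA \subseteq \D_t(A)$. All of the content lies in the reverse inclusion.

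For $\D_t(A) \subseteq tA$, take $y \in \D_t(A)$ witnessed by nets $a_i \in A$ and $t_i \in T$ with $t_i \to t$ in $E(X)$ and $t_i a_i \to y$. Since $A$ is closed in the compact space $X$, after passing to a subnet I may assume $a_i \to a \in A$, and the goal becomes $y = ta$. First I would reduce to the identity: as $t \in T$ lies in the topological centre of $E(X)$, left multiplication by $t^{-1}$ is continuous there, so $t_i \to t$ yields $s_i := t^{-1}t_i \to e$; applying the homeomorphism $x \mapsto t^{-1}x$ of $X$ to $t_i a_i \to y$ gives $s_i a_i \to t^{-1}y$. Thus it suffices to treat $t = e$, i.e.\ to show $\D_e(A) = A$: whenever $s_i \to e$ in $E(X)$, $a_i \in A$ and $s_i a_i \to z$, then $z \in A$.

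The main obstacle is precisely this reduced claim, since it is exactly the point at which joint continuity of the evaluation map $E(X) \times X \to X$ would be invoked, and that map is only separately continuous. I would circumvent it by moving to the induced semigroup: passing to a subnet with $s_i \to \gamma$ in the compact semigroup $E(2^X)$ forces $s_i A \to \gamma A$ in the Hausdorff metric (evaluation at a fixed set is continuous on $E(2^X)$), whence $z \in \gamma A$; moreover $\theta(\gamma) = \lim \theta(s_i) = e$, so that $A = \theta(\gamma)A \subseteq \gamma A$ and the task is narrowed to the reverse containment $\gamma A \subseteq A$. Establishing that every such limit $\gamma$ acts as the identity on $2^X$ — equivalently, that the graphs of the $s_i$ Hausdorff-converge to the diagonal once $s_i \to e$ pointwise on $X$ — is the delicate step, and the only place where the genuine homeomorphism property of elements of $T$ (as opposed to arbitrary members of $E(X)$) is essential; unwinding the reduction then returns $y = ta$ and hence $\D_t(A) = tA$.
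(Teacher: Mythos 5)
Your argument is a chain of honest reductions, but it does not close. The final assertion --- that every subnet limit $\gamma\in E(2^X)$ of a net $s_i\in T$ with $s_i\to e$ in $E(X)$ satisfies $\gamma A\subseteq A$ --- is exactly where you stop, and it carries the entire content of the lemma: the constant-net inclusion $tA\subseteq \D_t(A)$, the translation by $t^{-1}$ reducing to $t=e$, and the passage to $E(2^X)$ with $\theta(\gamma)=e$ are all reversible repackagings of the one claim $\D_e(A)\subseteq A$. Flagging that claim as ``the delicate step'' and gesturing at the ``homeomorphism property of elements of $T$'' is not a proof, and the property you invoke cannot supply one: $s_i\to e$ in $E(X)$ is pointwise convergence on $X$, while $\gamma A\subseteq A$ for every closed $A$ amounts to $s_i\to \mathrm{id}$ in $E(2^X)$, i.e.\ uniform convergence, and a net of homeomorphisms converging pointwise to the identity need not converge uniformly. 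This is precisely the gap between rigid and uniformly rigid flows. Concretely, if $s_n\to e$ pointwise but not uniformly, choose $\epsilon>0$ and $x_n$ with $d(s_nx_n,x_n)\geq\epsilon$, pass to a subsequence with $x_n\to x$ and $s_nx_n\to y$, so $d(x,y)\geq\epsilon$; then $A=\{x\}\cup\{x_n:n\geq N\}$ (with $N$ large enough that $d(x_n,x)<\epsilon/2$ for $n\geq N$) is a closed set with $y\in \D_e(A)$ but $d(y,A)\geq\epsilon/2$, so $\D_e(A)\neq eA$. Thus the step you left open is not a formality to be filled in later; at this level of generality it fails.

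For comparison, the paper's proof is one line: from $t_\alpha\to t$ it asserts $t_\alpha A\to tA$ in $2^X$, which is the same unproved implication you isolated, just not named. So you have correctly located the crux, and more candidly than the source: the lemma is immediate if ``$t_i\to t$'' is read as convergence in $E(2^X)$ (or in $\beta T$), but under the reading the paper's definition of $\D_p$ actually specifies --- convergence in $E(X)$, i.e.\ via the action on $X$ --- the reverse inclusion $\D_t(A)\subseteq tA$ requires an additional hypothesis (equicontinuity, or uniform convergence along the net) that neither your proposal nor the paper provides. As submitted, your proof is incomplete, and the missing step is not recoverable by the route you sketch.
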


\begin{proof}
	Note that $tA \subset \D_t(A)$ trivially. For the converse, observe that for $y \in \D_t(A)$ we have $t_\alpha \to t$ for net $\lbrace t_\alpha \rbrace$ in $T$ and a net $\lbrace a_\alpha \rbrace$ in $A$ with $t_\alpha a_\alpha \to y$. But $t_\alpha A \to t A$ and so $y \in tA$. 
\end{proof}

\begin{rem}
	It can be seen that, $\D_p(\{x\})  = \{px\}  \subset \D_p(x)$. 
	
	Also, $\bigcup \limits_{x \in A} \D_p(x) \subset \D_p(A), \ \forall \ p \in E(X)$ and  $  \D(A) \subset \bigcup \limits_{p \in E(X)} \D_p(A), \ \forall \ A \in 2^X$ with both these inclusions  usually strict.

\bigskip

	For each $p \in E(X)$, $\D_p: 2^X \to 2^X $ is a closed function, though it need not be continuous.
\end{rem}

\begin{lem} \label{rules}
	For  $A \in 2^X$, $t \in T$ and $p, q \in E(X)$ the following holds:
	
	1. $\D_p(tA) = t \D_p(A)$.
	
	2. $\D_{tp}(A) = t \D_p(A)$.
	
	3. $\D_{pq} (A) = \D_p (\D_q(A)) $.

\end{lem}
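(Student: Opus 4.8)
The plan is to dispatch parts 1 and 2 directly from the definition, independently of part 3, and then to prove part 3 by two inclusions, the second of which carries all the content.

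For part 1, take $y\in\D_p(tA)$, witnessed by nets $s_i\to p$ in $T$ and $ta_i\in tA$ (with $a_i\in A$) such that $s_i(ta_i)\to y$. Since $T$ is abelian and each element of $T$ acts as a homeomorphism, $s_i(ta_i)=t(s_ia_i)$, so $s_ia_i\to t^{-1}y$; hence $t^{-1}y\in\D_p(A)$, i.e. $y\in t\D_p(A)$, and the reverse reading gives the opposite inclusion. For part 2, take $y\in\D_{tp}(A)$ witnessed by $s_i\to tp$ and $a_i\in A$ with $s_ia_i\to y$. Putting $w_i:=t^{-1}s_i\in T$, continuity of the homeomorphism $t^{-1}$ turns $s_i\xi\to(tp)\xi=t(p\xi)$ into $w_i\xi\to p\xi$ for every $\xi$, so $w_i\to p$ in $E(X)$; and $s_ia_i=t(w_ia_i)$ forces $w_ia_i\to t^{-1}y$. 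Thus $y\in\D_{tp}(A)\iff t^{-1}y\in\D_p(A)\iff y\in t\D_p(A)$. Both arguments are routine once one records that left multiplication by an element of $T$ is continuous on $E(X)$.

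For part 3, I would first prove $\D_p(\D_q(A))\subseteq\D_{pq}(A)$ by a diagonal argument. Unravelling the hypothesis gives an outer net $s_\lambda\to p$ in $T$ and points $b_\lambda\in\D_q(A)$ with $s_\lambda b_\lambda\to y$, and for each $\lambda$ an inner net $r^\lambda_\mu\to q$ in $T$ and $a^\lambda_\mu\in A$ with $r^\lambda_\mu a^\lambda_\mu\to b_\lambda$. Indexing by triples $(\lambda,F,n)$, with $F\subseteq X$ finite and $n\in\N$, I would choose $\mu=\mu(\lambda,F,n)$ so large that $s_\lambda r^\lambda_\mu a^\lambda_\mu$ lies within $1/n$ of $s_\lambda b_\lambda$ and $s_\lambda r^\lambda_\mu\xi$ lies within $1/n$ of $s_\lambda(q\xi)$ for all $\xi\in F$, and then set $t_{(\lambda,F,n)}:=s_\lambda r^\lambda_\mu\in T$ and $a_{(\lambda,F,n)}:=a^\lambda_\mu\in A$. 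The crucial point, which sidesteps the discontinuity of $p$, is that the continuous map $s_\lambda$ is applied to the \emph{fixed} target point $q\xi$: since $s_\lambda\to p$ pointwise, $s_\lambda(q\xi)\to p(q\xi)=(pq)\xi$, whence $t_{(\lambda,F,n)}\to pq$ in $E(X)$, while $t_{(\lambda,F,n)}a_{(\lambda,F,n)}\to y$; this exhibits $y\in\D_{pq}(A)$. This inclusion is essentially the statement that $E(2^X)$ acts on $2^X$ by composition and that the map $\theta$ of Theorem \ref{humara} is multiplicative.

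The reverse inclusion $\D_{pq}(A)\subseteq\D_p(\D_q(A))$ is where I expect the real difficulty. The approach I would take is to first establish the identification $\D_p(A)=\bigcup\{\,p'\circ A: p'\in\beta T,\ \Phi(p')=p\,\}$: a net $t_i\to p$ in $E(X)$ has, by compactness, a subnet converging in $\beta T$ to some $p'$ with $\Phi(p')=p$, and then $t_iA\to p'\circ A$ in the Hausdorff metric, while conversely density of $T$ in $\beta T$ recovers every such $p'\circ A$. Granting this, and using that the circle operator is a genuine action of $\beta T$ (equivalently, that $\Psi$ is a semigroup homomorphism into $E(2^X)$, so $(p'q')\circ A=p'\circ(q'\circ A)$) together with the monotonicity of the maps $p'\circ(-)$, the inclusion reduces to a factorization statement: every $r'\in\beta T$ with $\Phi(r')=pq$ should be expressible as $r'=p'q'$ with $\Phi(p')=p$ and $\Phi(q')=q$, after which $r'\circ A=p'\circ(q'\circ A)\subseteq p'\circ\D_q(A)\subseteq\D_p(\D_q(A))$. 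This factorization is the crux and the main obstacle: unlike the forward direction, one cannot split a single net $t_i\to pq$ into a $p$-part and a $q$-part, precisely because inversion and left multiplication are discontinuous on $E(X)$ (and on $\beta T$), so the diagonal trick above has no mirror image. In the equicontinuous case $E(X)$ is a topological group and the factorization is immediate; in general it must be extracted from the algebraic structure of the fibres of $\Phi$, and this is the step I expect to occupy the bulk of the work.
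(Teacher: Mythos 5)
Your parts 1 and 2 are correct, and your inclusion $\D_p(\D_q(A))\subseteq\D_{pq}(A)$ is a careful version of the only argument the paper actually records: the paper's sole justification for part 3 is that multiplication on the right is continuous in $E(X)$, which is exactly the fact ($t_j q\to pq$ when $t_j\to p$, with $t_j\in T$ acting continuously on the fixed points $q\xi$) that drives your diagonal construction. Up to that point you and the paper coincide.

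The genuine gap is the reverse inclusion $\D_{pq}(A)\subseteq\D_p(\D_q(A))$, which you correctly isolate and then do not prove. You should be aware that the paper does not prove it either: its displayed chain simply equates the set of limits $z=\lim_j t_j y_j$ with $y_j\in\D_q(A)$ with the set of limits along a single net $t_{ij}\to pq$, and right-continuity only converts an iterated limit into a single net, never the other way around -- a single net $t_k\to pq$ with $t_k a_k\to z$ does not obviously split into a $p$-part applied to an element of $\D_q(A)$, precisely as you observe. So you have not overlooked a device the paper supplies; you have located the step it leaves unjustified. Two remarks on your proposed repair: the identification $\D_p(A)=\bigcup\{p'\circ A:\Phi(p')=p\}$ is correct and useful, but the factorization $\Phi^{-1}(pq)=\Phi^{-1}(p)\,\Phi^{-1}(q)$ you reduce to is both stronger than needed (since $\D_q(A)$ is a single closed set, $\hat p$ applied to it can capture limits that hop between the various $q'\circ A$, so only the set-level containment $\Psi(r')A\subseteq\D_p(\D_q(A))$ for $r'\in\Phi^{-1}(pq)$ is required) and unlikely to hold in general, because left multiplication in $\beta T$ and in $E(X)$ is not continuous. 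As written, your proposal establishes parts 1, 2 and one half of part 3; the other half remains open, and since Corollary \ref{idempotent} and the Section 4 characterization of almost periodic points of $(2^X,T)$ rest on the full equality $\D_{pq}=\D_p\circ\D_q$, this is a point worth pressing rather than conceding.
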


\begin{proof}
	We skip the trivial proofs of 1. and 2.

\smallskip	
	
		For 3. we see that 
	
	\begin{align*}
	\D_p(\D_q(A)) = \D_p ( \lbrace y \in X: \exists\ \text{nets} \ \lbrace a_{i}\rbrace \ \text{in} \ A, \ \text{and} \ \lbrace t_{i} \rbrace \ \text{in} \ T \ \text{with} \ t_i \to q  \ \text{such that}  \ t_{i}a_{i}\rightarrow y\rbrace)\\
	=  \lbrace z \in X: \exists\ \text{nets} \ \lbrace y_{j}\rbrace \ \text{in} \ \D_q(A), \ \text{and} \ \lbrace t_{j} \rbrace \ \text{in} \ T \ \text{with} \ t_j \to p  \ \text{such that}  \ t_{j}y_{j}\rightarrow z\rbrace\\
	=  \lbrace z \in X: \exists\ \text{nets} \ \lbrace a_{ij}\rbrace \ \text{in} \ A, \ \text{and} \ \lbrace t_{ij} \rbrace \ \text{in} \ T \ \text{with} \ t_{ij} \to pq \ \text{such that}  \ t_{ij}a_{ij}\rightarrow z\rbrace\\
	= \D_{pq} (A) \ [\text{since multiplication on right is continuous in semigroup $E(X)$}]
	\end{align*} \end{proof}

\begin{cor}
	For $ \ \forall \ p \in E(X)$,  $\D_{p^n}  = (\D_p)^n  $ on $2^X$ for all $n \in \N$ .
\end{cor}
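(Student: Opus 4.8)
The plan is a routine induction on $n$, with essentially all of the content already supplied by part 3 of Lemma \ref{rules}. First I would record the one point needed for the statement even to parse: since $E(X)$ is a semigroup (the closure of $T$ in $X^X$ under composition), it is closed under multiplication, so $p^n \in E(X)$ for every $n \in \N$. Consequently each $\D_{p^n}$ is a well-defined closed self-map of $2^X$ by the Proposition asserting that $\D_q : 2^X \to 2^X$ is a function for every $q \in E(X)$, and the symbol $(\D_p)^n$ is meaningful as the $n$-fold composition $\D_p \circ \cdots \circ \D_p$ of such maps.

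For the base case $n = 1$ there is nothing to prove, as $\D_{p^1} = \D_p = (\D_p)^1$. For the inductive step I would assume $\D_{p^n} = (\D_p)^n$ on $2^X$ and apply Lemma \ref{rules}(3) to the pair $(p, p^n)$: for an arbitrary $A \in 2^X$,
\[
\D_{p^{n+1}}(A) = \D_{p \cdot p^n}(A) = \D_p\big(\D_{p^n}(A)\big) = \D_p\big((\D_p)^n(A)\big) = (\D_p)^{n+1}(A).
\]
Since $A$ was arbitrary this yields $\D_{p^{n+1}} = (\D_p)^{n+1}$, closing the induction.

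There is no genuine obstacle here: the only substantive ingredient is the composition identity $\D_{pq} = \D_p \circ \D_q$ of Lemma \ref{rules}(3), whose own proof rests on the right-continuity of multiplication in the enveloping semigroup $E(X)$. Once that identity is in hand, the corollary follows immediately, so the \emph{only} thing worth stating explicitly beyond the one-line induction is the closure of $E(X)$ under multiplication that legitimizes writing $p^n$.
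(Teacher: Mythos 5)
Your proof is correct and is exactly the argument the paper intends: the corollary is stated without proof as an immediate consequence of Lemma \ref{rules}(3), and your induction on $n$ using $\D_{pq} = \D_p \circ \D_q$ (together with the remark that $E(X)$ is closed under multiplication so $p^n$ makes sense) is the standard way to make that explicit. No discrepancy with the paper's approach.
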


\begin{cor} \label{idempotent}
	Let $u \in E(X)$ be an idempotent, then $(\D_u)^n = \D_u$ on $2^X$ for all $n \in \N$ .
\end{cor}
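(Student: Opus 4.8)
The plan is to deduce this immediately from the preceding corollary (which gives $(\D_p)^n = \D_{p^n}$ on $2^X$ for every $p \in E(X)$ and $n \in \N$) together with the defining property of an idempotent. First I would recall that that corollary itself rests on part 3 of Lemma \ref{rules}, namely the composition law $\D_{pq} = \D_p \circ \D_q$ on $2^X$. Specializing $p = u$ in the corollary gives $(\D_u)^n = \D_{u^n}$.

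The second step is to observe that since $u \in E(X)$ is an idempotent, $u^2 = u$, and hence by a trivial induction $u^n = u$ for all $n \in \N$: assuming $u^n = u$, one has $u^{n+1} = u^n u = u u = u$. Substituting $u^n = u$ into the identity $(\D_u)^n = \D_{u^n}$ then yields $(\D_u)^n = \D_u$, as claimed.

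Alternatively, one can bypass the corollary and argue directly by induction on $n$ using Lemma \ref{rules}.3. The base case $n = 1$ is vacuous, and for the inductive step $(\D_u)^{n+1} = \D_u \circ (\D_u)^n = \D_u \circ \D_u = \D_{u \cdot u} = \D_u$, where the last two equalities use the composition law and $u^2 = u$. I expect no genuine obstacle in either route; the only point needing (routine) justification is the semigroup identity $u^n = u$ for idempotents, which is immediate.
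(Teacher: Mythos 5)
Your argument is correct and is exactly the route the paper intends: the statement is presented as an immediate consequence of the preceding corollary $(\D_p)^n = \D_{p^n}$ (itself resting on Lemma \ref{rules}.3) combined with the identity $u^n = u$ for an idempotent. No further comment is needed.
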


Thus $\D_u$ is an idempotent function for every idempotent $u \in E(X)$.

\begin{cor}
	For the system $(X,T)$, the collection $\{\D_p: p \in E(X)\}$ of  self-maps on $2^X$ is a monoid in $(2^X)^{2^X}$ with $\D_e$ being an identity.
\end{cor}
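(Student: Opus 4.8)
The plan is to reduce the claim to two elementary facts, after which the monoid structure is immediate: that the collection $\{\D_p : p \in E(X)\}$ is closed under composition of self-maps of $2^X$, and that $\D_e$ is a two-sided identity for this operation. Associativity requires no separate argument, since composition of functions inside $(2^X)^{2^X}$ is always associative; so the content is entirely in closure and the identity.

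For closure, I would invoke part 3 of Lemma \ref{rules}, which asserts $\D_{pq}(A) = \D_p(\D_q(A))$ for all $A \in 2^X$ and all $p,q \in E(X)$; read as an equality of self-maps this says $\D_p \circ \D_q = \D_{pq}$. Because $E(X)$ is the enveloping semigroup of $(X,T)$ and hence closed under composition, the product $pq$ again lies in $E(X)$, so $\D_{pq}$ is again a member of the collection. Thus $\{\D_p : p \in E(X)\}$ is closed under $\circ$, and the operation is inherited from $(2^X)^{2^X}$.

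For the identity, I would recall that $e$, the identity of $T$, is also the identity of $E(X)$ (as $T \subseteq E(X)$). The earlier lemma giving $\D_t(A) = tA$ for $t \in T$, specialised to $t = e$, yields $\D_e(A) = eA = A$ for every $A \in 2^X$; hence $\D_e = \mathrm{id}_{2^X}$. Combining this with the closure identity above, $\D_e \circ \D_p = \D_{ep} = \D_p$ and $\D_p \circ \D_e = \D_{pe} = \D_p$ for every $p \in E(X)$, so $\D_e$ is a two-sided identity for the collection.

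Packaged differently, what the argument really establishes is that $p \mapsto \D_p$ is a monoid homomorphism from $(E(X), \cdot)$ into $((2^X)^{2^X}, \circ)$ whose image is exactly $\{\D_p : p \in E(X)\}$: part 3 of Lemma \ref{rules} is the multiplicativity, and the computation above identifies the image of the identity. I expect no genuine obstacle here, since every ingredient is already in hand; the one point deserving a moment's care is the verification that $\D_e$ is \emph{literally} the identity self-map and not merely an idempotent — the latter being all that Corollary \ref{idempotent} guarantees for a general idempotent $u \in E(X)$, for which $\D_u$ need not equal $\mathrm{id}_{2^X}$.
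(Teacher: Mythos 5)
Your proposal is correct and is essentially the argument the paper intends: the corollary is stated without proof as an immediate consequence of part 3 of Lemma \ref{rules} (giving closure, $\D_p \circ \D_q = \D_{pq}$) together with the earlier lemma $\D_t(A)=tA$ specialised to $t=e$ (giving $\D_e = \mathrm{id}_{2^X}$), with associativity inherited from composition in $(2^X)^{2^X}$. Your closing remark distinguishing $\D_e$ being the literal identity from the mere idempotency of $\D_u$ in Corollary \ref{idempotent} is a correct and worthwhile point of care.
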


Note that,  this monoid need not be abelian since it is possible that $pq \neq qp$  in $E(X)$.

\begin{lem}
	For each   $p \in E(X)$ and a finite $A \in 2^X$, $\D_p(A) = pA$.
\end{lem}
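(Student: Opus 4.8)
The plan is to prove the two inclusions $pA \subseteq \D_p(A)$ and $\D_p(A) \subseteq pA$ separately; the first holds for arbitrary $A$ and the second is where the finiteness hypothesis does all the work.

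For the easy inclusion $pA \subseteq \D_p(A)$, no finiteness is needed. By the earlier remark that $\D_p(\{x\}) = \{px\}$ and $\bigcup_{x \in A}\D_p(x) \subseteq \D_p(A)$, we immediately get $pA = \bigcup_{x\in A}\{px\} \subseteq \D_p(A)$. Equivalently, for each $a \in A$ one takes the constant net $a_i \equiv a$ together with any net $t_i \to p$ in $T$; then $t_i a \to pa$ by the defining property of the action of $p \in E(X)$, so $pa \in \D_p(A)$. This step is routine and does not use that $A$ is finite.

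For the reverse inclusion $\D_p(A) \subseteq pA$, I would fix $y \in \D_p(A)$ and choose nets $\{a_i\}$ in $A$ and $\{t_i\}$ in $T$ with $t_i \to p$ and $t_i a_i \to y$. Here finiteness of $A$ enters: the net $\{a_i\}$ takes values in the finite set $A$, so by pigeonhole at least one value $a \in A$ is attained cofinally, i.e. the index set $\{i : a_i = a\}$ is cofinal and directed. Passing to the corresponding subnet, $a_i \equiv a$ is constant, while $t_i \to p$ and $t_i a_i \to y$ persist since a subnet of a convergent net converges to the same limit. Along this subnet $t_i a_i = t_i a \to pa$ by the defining property $px = \lim_{t_i \to p} t_i x$ of the Ellis action, and simultaneously $t_i a_i \to y$; since $X$ is compact metric, hence Hausdorff, limits are unique and therefore $y = pa \in pA$.

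The only delicate point, and the step I expect to be the crux, is the subnet extraction: one must be sure that a net valued in a finite set admits a subnet on which the selected points $a_i$ are constant, and that this passage leaves both $t_i \to p$ and $t_i a_i \to y$ intact. This is precisely the standard fact that a net into a finite set is frequently equal to some value, which yields a cofinal subnet on which that value is taken identically. No deeper dynamical input is required beyond this and the defining convergence property of $p \in E(X)$ together with uniqueness of limits in the Hausdorff space $X$.
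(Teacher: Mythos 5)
Your proof is correct. The paper states this lemma without proof, and your argument (the constant-net observation for $pA \subseteq \D_p(A)$, and the pigeonhole/cofinal-subnet extraction plus uniqueness of limits in the Hausdorff space $X$ for the reverse inclusion) is exactly the routine verification the paper leaves implicit.
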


\begin{rem}
	Recall Remark \ref{remark}, we note that for $u \sim v \in E(X)$, $\D_u(A)$ need not be equal to $\D_v(A)$ for some $A \in 2^X$.
\end{rem}

\begin{prop}
	For flows $(X, T)$ and $(Y, T)$,  let $\pi : (X, T) \to (Y, T)$ be a conjugacy. Then $\pi(\D_p(A)) = \D_{\Pi(p)}(\pi(A)) = \D_{\Pi(p)}(\pi_*(A))$, for all $p \in E(X)$ and $A \in 2^X$.
	
	Further, if $\pi$ is a factor map then $\pi^{-1}(\D_{\Pi(p)}(B)) = \D_{p}(\pi^{-1}(B))$, for all $p \in E(X)$ and $B \in 2^Y$.
\end{prop}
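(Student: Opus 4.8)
My plan is to prove both identities by pushing the defining nets of $\D_p$ forward or backward through $\pi$, using only that $\pi$ is continuous and equivariant ($\pi(tz)=t\pi(z)$ for $t\in T$) and that the induced map $\Pi\colon E(X)\to E(Y)$ of Theorem \ref{fac-en} is a continuous homomorphism fixing $T$ pointwise, so that $t_i\to p$ in $E(X)$ forces $t_i=\Pi(t_i)\to\Pi(p)$ in $E(Y)$. The second equality $\D_{\Pi(p)}(\pi(A))=\D_{\Pi(p)}(\pi_*(A))$ is immediate, since for the induced map $\pi_*(A)=\{\pi(a):a\in A\}=\pi(A)$.

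For the conjugacy case I would first prove $\pi(\D_p(A))\subseteq\D_{\Pi(p)}(\pi(A))$: if $y\in\D_p(A)$, choose nets $a_i\in A$ and $t_i\in T$ with $t_i\to p$ and $t_ia_i\to y$; then $\pi(a_i)\in\pi(A)$, $t_i\to\Pi(p)$, and by continuity and equivariance $t_i\pi(a_i)=\pi(t_ia_i)\to\pi(y)$, whence $\pi(y)\in\D_{\Pi(p)}(\pi(A))$. For the reverse inclusion I would exploit that a conjugacy $\pi$ induces an isomorphism $\Pi$ of enveloping semigroups whose inverse is the map furnished by Theorem \ref{fac-en} for the conjugacy $\pi^{-1}$; applying the inclusion just proved to $\pi^{-1}$ (with $q=\Pi(p)$ and $C=\pi(A)$) gives $\pi^{-1}(\D_{\Pi(p)}(\pi(A)))\subseteq\D_{p}(A)$, and applying $\pi$ yields the opposite inclusion. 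This closes the conjugacy case without further work.

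For a factor map one inclusion is still easy. Since $\pi$ is onto, $\pi(\pi^{-1}(B))=B$ and $t\,\pi^{-1}(B)=\pi^{-1}(tB)$ for every $t\in T$; pushing a witnessing net for $x\in\D_p(\pi^{-1}(B))$ forward through $\pi$ exactly as above shows $\pi(x)\in\D_{\Pi(p)}(B)$, i.e. $\D_p(\pi^{-1}(B))\subseteq\pi^{-1}(\D_{\Pi(p)}(B))$. The content of the statement is the reverse inclusion, and this is where I expect the real difficulty. Given $x$ with $y:=\pi(x)\in\D_{\Pi(p)}(B)$, I have nets $b_i\in B$ and $s_i\in T$ with $s_i\to\Pi(p)$ in $E(Y)$ and $s_ib_i\to y$; choosing preimages $a_i\in\pi^{-1}(b_i)\subseteq\pi^{-1}(B)$ and passing to subnets by compactness of $X$, $E(X)$ and $E(2^X)$, I may assume $s_i\to q$ in $E(X)$ and $s_ia_i\to x'$ in $X$. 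The trouble is that this only produces \emph{some} $x'$ in the fibre $\pi^{-1}(y)$ and \emph{some} $q\in E(X)$ with $\Pi(q)=\Pi(p)$, rather than the prescribed $x$ and the prescribed $p$.

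The main obstacle is therefore precisely the lower semicontinuity of the set map $B\mapsto\pi^{-1}(B)$ from $2^Y$ to $2^X$ (equivalently, that the whole fibre over each point of $\D_{\Pi(p)}(B)$ is attained, and at the correct element $p$). To handle it I would reformulate $\D_p$ through the enveloping semigroup of the induced system: for $\alpha\in E(2^X)$ with $\theta_X(\alpha)=p$ one has $\alpha\pi^{-1}(B)=\lim t_i\,\pi^{-1}(B)=\lim\pi^{-1}(t_iB)$, while the commutative diagram relating $\Pi$, $\Pi^\dagger$ and the maps $\theta_X\colon E(2^X)\to E(X)$, $\theta_Y\colon E(2^Y)\to E(Y)$ of Theorem \ref{humara} gives $\theta_Y\circ\Pi^\dagger=\Pi\circ\theta_X$ together with $\pi(\alpha\pi^{-1}(B))=\Pi^\dagger(\alpha)B$. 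Writing $\D_p(\pi^{-1}(B))=\bigcup\{\alpha\pi^{-1}(B):\theta_X(\alpha)=p\}$ and $\D_{\Pi(p)}(B)=\bigcup\{\beta B:\theta_Y(\beta)=\Pi(p)\}$, the reverse inclusion reduces to showing that $\Pi^\dagger$ carries the fibre $\theta_X^{-1}(p)$ onto $\theta_Y^{-1}(\Pi(p))$ and that each $\alpha\pi^{-1}(B)$ is $\pi$-saturated; establishing these two facts, via surjectivity of $\Pi^\dagger$ and the identity $t\,\pi^{-1}(B)=\pi^{-1}(tB)$, is the delicate heart of the argument and the step on which I would spend the most care.
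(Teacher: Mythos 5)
Your argument for the conjugacy case is correct and is essentially the paper's own proof: the paper establishes $\pi(\D_p(A))=\D_{\Pi(p)}(\pi(A))$ by pushing the defining nets through the homeomorphism $\pi$ and writing the result as a single chain of set equalities, which is exactly your forward inclusion together with its mirror image for $\pi^{-1}$; the identification $\pi_*(A)=\pi(A)$ disposes of the second equality just as you say.

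The difficulty is in the second half. You correctly isolate the reverse inclusion $\pi^{-1}(\D_{\Pi(p)}(B))\subseteq\D_{p}(\pi^{-1}(B))$ as the hard step, but you never prove it, and the two obstructions you name (a net with $s_i\to\Pi(p)$ in $E(Y)$ need only converge to some $q$ in the fibre $\Pi^{-1}(\Pi(p))$, and the lifted net need only reach \emph{some} point of the fibre over $y$) are fatal rather than technical: for a genuinely non-injective factor map the inclusion is false. Take $(X,\sigma)$ a Sturmian subshift over $T=\Z$ with its almost one-to-one factor map $\pi$ onto an irrational rotation of $\T$, let $B=\{y_0\}$ with $\pi^{-1}(B)=\{x_0^{-},x_0^{+}\}$ a two-point fibre, and let $u$ be a minimal idempotent of $E(X)$. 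Since $x_0^{-}$ and $x_0^{+}$ are asymptotic, hence proximal, $ux_0^{-}=ux_0^{+}$, so by the paper's lemma on finite sets $\D_u(\pi^{-1}(B))=u\{x_0^{-},x_0^{+}\}$ is a singleton; on the other hand $E(\T)$ is a group, so $\Pi(u)=\mathrm{id}$ and $\pi^{-1}(\D_{\Pi(u)}(B))=\pi^{-1}(\{y_0\})=\{x_0^{-},x_0^{+}\}$. In particular the set $\alpha\,\pi^{-1}(B)$ in your final paragraph need not be $\pi$-saturated, so that program cannot close. The paper's own proof of the second identity is nothing more than the substitution $A=\pi^{-1}(B)$ into the first one, i.e.\ it tacitly keeps $\pi$ invertible; read that way the ``Further'' clause is an immediate consequence of part one and there is nothing left to prove. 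You should either adopt that reading explicitly or record that the statement fails for many-to-one factor maps.
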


\begin{proof} Note that for conjugacy $\pi : (X, T) \to (Y, T)$, the induced map $\pi_* : (2^X, T) \to (2^Y, T)$ is also a conjugacy. Let $\pi(A) = B = \pi_*(A)$. 
	
	And note that  $p \in E(X) \Longrightarrow \Pi(p) \in E(Y)$ and $t_i \to p \stackrel{\Pi}{\xrightarrow{\hspace*{1cm}}} t_i \to \Pi(p)$.

Hence, 

	$\pi(\D_p(A)) = \pi( \lbrace y \in X: \exists\ \text{nets} \ \lbrace a_{i}\rbrace \ \text{in} \ A, \ \text{and} \ \lbrace t_{i} \rbrace \ \text{in} \ T \ \text{with} \ t_i \to p  \ \text{such that}  \ t_{i}a_{i} \rightarrow y \rbrace)$
	
	 $=  \lbrace \pi(y) \in Y: \exists\ \text{nets} \ \lbrace \pi(a_{i})\rbrace \ \text{in} \ B, \ \text{and} \ \lbrace t_{i} \rbrace \ \text{in} \ T \ \text{with} \ t_i \to \Pi(p)  \ \text{such that}  \ t_{i} \pi(a_{i}) \rightarrow \pi(y)\rbrace$
	 
	 $= \D_{\Pi(p)}(B) = \D_{\Pi(p)}(\pi_*(A))$.
	 
	 \medskip
	 
	 The second part follows by assuming $A = \pi^{-1}(B)$.	
	
\end{proof}

\begin{lem}
	For equicontinuous $(2^X,T)$, $B \in \D_p(A) \Leftrightarrow A \in \D_{p^{-1}}(B)$.
\end{lem}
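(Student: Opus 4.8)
The plan is to read this as the induced-system analogue of the earlier equicontinuous symmetry lemma and to prove it by transporting that argument from $X$ to the phase space $2^X$. For the statement to typecheck one must interpret $\D_p$ here as the prolongation along $p$ of the flow $(2^X,T)$ itself, so that $A,B \in 2^X$ play the role of \emph{points} and $\D_p(A) \subseteq 2^X$, with $p$ ranging over the enveloping semigroup $E(2^X)$. Under this reading the lemma is literally the previous lemma with $(X,T)$ replaced by $(2^X,T)$, and the only structural fact I need is that equicontinuity of $(2^X,T)$ forces $E(2^X)$ to be a topological group. In such a group inversion is continuous, so for any net $\{t_i\}$ in $T$ and any $p \in E(2^X)$ one has $t_i \to p \Leftrightarrow t_i^{-1} \to p^{-1}$; this is precisely the ingredient used to prove the symmetry on $X$.

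Granting this, the forward implication is routine net bookkeeping. Unwinding $B \in \D_p(A)$ gives nets $\{A_i\}$ in $2^X$ with $A_i \to A$ and $\{t_i\}$ in $T$ with $t_i \to p$ such that $t_iA_i \to B$, all limits taken in the Hausdorff metric. Since each $t \in T$ acts on $2^X$ as a homeomorphism, I set $B_i := t_iA_i$, so that $B_i \to B$ while $t_i^{-1}B_i = A_i \to A$. Because inversion is continuous, $t_i^{-1} \to p^{-1}$, and therefore the nets $\{B_i\}$ and $\{t_i^{-1}\}$ witness $A \in \D_{p^{-1}}(B)$. The reverse implication is obtained verbatim after swapping $A$ with $B$ and $p$ with $p^{-1}$ and using $(p^{-1})^{-1}=p$, so the biconditional follows.

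The step that carries all the content is the passage $t_i \to p \Rightarrow t_i^{-1} \to p^{-1}$, i.e.\ the continuity of inversion in $E(2^X)$. For a general flow the enveloping semigroup is only right-topological and its elements need not be invertible, so nothing like the lemma can hold; it is exactly equicontinuity of $(2^X,T)$ that realises $E(2^X)$ as a group of uniformly equicontinuous homeomorphisms of $2^X$, guaranteeing both that $p^{-1}$ exists in $E(2^X)$ and that applying the maps $t_i^{-1}$ preserves the Hausdorff convergence without distortion. Once this is secured the remaining manipulation is identical to the already-established equicontinuous lemma on $X$, which is why I would present the proof as a direct application of that lemma to the induced flow.
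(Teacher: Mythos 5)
Your proof is correct and follows essentially the same route as the paper: the paper's own argument is the one-line observation that for equicontinuous $(2^X,T)$ the enveloping semigroup $E(2^X)\cong E(X)$ is a (topological) group of homeomorphisms, so that $t_\lambda \to p \Leftrightarrow t_\lambda^{-1}\to p^{-1}$, which is exactly the key step you isolate. You merely make explicit the net bookkeeping (and the reading of $\D_p$ as the prolongation of the induced flow) that the paper leaves implicit.
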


\begin{proof}
	Recall that here $E(X) \cong E(2^X) -$ a group of homeomorphisms  \cite{TDES}, and so $t_\lambda \to p \Leftrightarrow t_\lambda^{-1} \to p^{-1}$.
\end{proof}

\bigskip

On the lines of Theorem \ref{3.1} we define $\D_* -$  \emph{ the prolongation } on $2^X$ as $\D_*(A) = \bigcup \limits_{p \in E(X)} \D_p(A), \ \forall \ A \in 2^X$.

Since compact union of closed sets is closed, we see that $\D_*: 2^X \to 2^X$ is actually a function. Note that $\D_*$ is a closed function though it need not be continuous.

\begin{lem}
	For the flow $(2^X,T)$, we have the following for the prolongation $\D_*$:
	\begin{enumerate}
		\item $\D_*(A)$ is $T-$invariant, i.e. $\D_*(tA)= \D_*(A) \ \forall \ t \in T$.
		
		\item Let $(2^Y,T)$ be another induced flow with $\pi_* : (2^X, T) \to (2^Y, T)$  a conjugacy. Then $\pi_*(\D_*(A)) = \D_*(\pi_*(A)) \ \forall  \ A \in 2^X$, where we denote the prolongation on both $2^X$ and $2^Y$ by $\D_*$.
		
		Further, if $\pi_*$ is a factor map then $\pi_*^{-1}(\D_*(B)) = \D_*(\pi_*^{-1}(B)) \ \forall B \in 2^Y$.
	\end{enumerate}
\end{lem}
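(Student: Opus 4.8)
The plan is to prove the two assertions separately, exploiting the rules already established for the prolongations $\D_p$ along individual semigroup elements and then taking unions over $p \in E(X)$.

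For the first assertion, that $\D_*(tA) = \D_*(A)$ for all $t \in T$, I would unwind the definition of $\D_*$ as a union and push the translation inside. Explicitly, $\D_*(tA) = \bigcup_{p \in E(X)} \D_p(tA)$, and by part 1 of Lemma \ref{rules} each term equals $t\,\D_p(A)$, so $\D_*(tA) = \bigcup_{p \in E(X)} t\,\D_p(A) = t \bigcup_{p \in E(X)} \D_p(A) = t\,\D_*(A)$. The remaining point is to argue that $t\,\D_*(A) = \D_*(A)$, i.e. that the union is already $T$-invariant. The cleanest way is to reindex: by part 2 of Lemma \ref{rules}, $t\,\D_p(A) = \D_{tp}(A)$, so $t\,\D_*(A) = \bigcup_{p \in E(X)} \D_{tp}(A) = \bigcup_{q \in tE(X)} \D_q(A)$. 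Since $t \in T$ is invertible in $E(X)$ and $E(X)$ is a semigroup containing $T$, left multiplication by $t$ permutes $E(X)$, so $tE(X) = E(X)$ and the reindexed union is again $\D_*(A)$. This yields $\D_*(tA) = \D_*(A)$.

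For the second assertion I would invoke the proposition on how $\D_p$ transforms under a map $\pi$. When $\pi_*$ is a conjugacy, the induced $\Pi$ is a bijection of enveloping semigroups, and the earlier proposition gives $\pi(\D_p(A)) = \D_{\Pi(p)}(\pi_*(A))$ pointwise in $p$. Taking the union over $p \in E(X)$ and using that $\pi$ commutes with unions (as a map of sets) gives $\pi_*(\D_*(A)) = \bigcup_p \pi(\D_p(A)) = \bigcup_p \D_{\Pi(p)}(\pi_*(A)) = \bigcup_{q \in \Pi(E(X))} \D_q(\pi_*(A))$; since $\Pi : E(X) \to E(Y)$ is onto (it is a factor map of enveloping semigroups), $\Pi(E(X)) = E(Y)$ and this last union is exactly $\D_*(\pi_*(A))$. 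For the factor-map refinement I would set $A = \pi_*^{-1}(B)$ and apply the corresponding $\pi^{-1}$-statement of the same proposition term by term, again commuting $\pi_*^{-1}$ past the union.

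The main obstacle is a bookkeeping subtlety rather than a deep difficulty: one must be careful that reindexing the union over $E(X)$ via left multiplication by $t$, or via $\Pi$, genuinely recovers the full index set. For the first part this needs $tE(X) = E(X)$, which holds because $t$ is an invertible element of the group $T \subset E(X)$; for the second it needs surjectivity of $\Pi$, guaranteed because $\Pi$ is a factor map of enveloping semigroups as in Theorem \ref{fac-en}. One should also confirm that $\pi$ commuting with arbitrary unions is legitimate at the level of $2^X \to 2^Y$, which is immediate since these are just image and preimage of sets; the only place continuity would matter is in knowing $\D_*$ is closed-valued, and that was already recorded before the statement.
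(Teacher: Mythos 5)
Your proof is correct: the paper itself omits the argument (``We skip the trivial proof''), and what you supply is exactly the intended routine verification, unwinding $\D_*$ as $\bigcup_{p\in E(X)}\D_p$ and applying Lemma \ref{rules} and the transformation proposition for $\D_p$ term by term. The two reindexing points you flag --- $tE(X)=E(X)$ because $t^{-1}\in T\subset E(X)$, and $\Pi(E(X))=E(Y)$ by surjectivity of the induced map on enveloping semigroups --- are precisely the details that make the ``trivial'' proof go through, so nothing is missing.
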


We skip the trivial proof.

\bigskip

For $p \in \beta T$ and $A \in 2^X$, we recall the \emph{circle operator},
$$p\circ A= \lbrace x\in X: \exists\ \text{nets} \ \lbrace a_{i}\rbrace \ \text{in} \ A, \ \text{and} \ \lbrace t_{i} \rbrace \ \text{in} \ T \ \text{with} \ t_{i}\rightarrow p  \ \text{such that}  \ t_{i}a_{i}\rightarrow x\rbrace.$$

Note that,  $p \in \beta T \Rightarrow \Psi(p) \in E(2^X) \Rightarrow \theta(\Psi(p)) = \Phi(p) \in E(X)$ and thus:

\begin{center}

	\begin{tabular}{|c|}
		\hline
		
		$p \circ A = \Psi(p)(A) = \D_{\Phi(p)}(A)$\\ 
		\hline
	\end{tabular}
	
\end{center}

Thus,   $\overline{O(A)} =  E(2^X) A =   \bigcup \{\Psi(p)A: p \in \beta T\} = \bigcup \{p \circ A: p \in \beta T\} =\beta T \bigcirc A$.

\bigskip

This gives, for $A \in 2^X$ and $p,q \in \beta T$:

\begin{enumerate}
	\item $t \circ A = \D_t(A) = tA$.
	\item ${\Phi(p)}A \subset \D_{\Phi(p)}(A) = \Psi(p) A = p \circ A$, and this containment is usually strict as mentioned in Remark \ref{contain}.
	\item $pq \circ A = \D_{\Phi(pq)}(A) = \D_{\Phi(p)} \D_{\Phi(q)}(A) = (p \circ q) \circ A$.
	\item $ \D_*(A) = \overline{O(A)} =  \beta T \bigcirc A =  \bigcup \{p\circ A: p \in \beta T\} = \bigcup \{\Psi(p) A: \Psi(p) \in E(2^X)\} =\bigcup \limits_{\Phi(p) \in E(X)} \D_{\Phi(p)}(A)$. 
\end{enumerate}

\bigskip

\section{ Quasifactors and Almost Periodic Points for $(2^X, T)$}

Quasifactors are minimal subsystems of $(2^X, T)$ and hence orbit closures of almost periodic points for $(2^X, T)$. Thus in order to understand the characteristics of quasifactors, one  needs to isolate the properties of the almost periodic points of $(2^X, T)$.

Note that $A \in 2^X$ is an almost periodic point for $(2^X, T)$ if $\tilde{u} \circ A = A$ for some minimal idempotent $\tilde{u} \in \beta T$, i.e.  $\Psi(\tilde{u})(A) = A$ for the minimal idempotent $ \Psi(\tilde{u}) \in E(2^X)$, i.e.  $\D_u(A) = A$ for the minimal idempotent $u  = \Phi(\tilde{u}) \in E(X)$.

\bigskip

Which elements of $2^X$ are almost periodic? What is the general nature of these sets?

\bigskip

 Recall Corollary \ref{idempotent}, for every minimal idempotent $u \in E(X)$ we notice that $\D_u(\D_u)(A) = \D_u(A) \ \forall \ A \in 2^X$. This gives,
 
 \begin{theo}
 	The almost periodic points for $(2^X, T)$ are precisely the elements of $2^X$ in the range of $\D_u$, for every minimal idempotent $u  = \theta(\bar{u}) = \Phi(\tilde{u}) \in E(X)$, for all minimal idempotents $\bar{u} \in E(2^X)$ or $\tilde{u} \in \beta T$.
 \end{theo}
 
 Recall Theorem \ref{humara}. Now, $\theta: J(2^X) \to J(X)$ need not be surjective \cite{TDES}. Thus, the almost periodic points in $2^X$ are precisely the elements in the range of $\D_u$ when $u  = \theta(\bar{u})$, for $\bar{u}
\in J(2^X)$.

\bigskip 
 
Since $uA \subset \D_u(A)$, $F_u \cap \D_u(A) \neq \emptyset \ \forall \ A\in 2^X$. Hence each almost periodic element in $2^X$ contains almost periodic points of $X$.

 \begin{lem} A finite $A \in 2^X$ is an almost periodic point in $(2^X, T)$ if and only if it is an almost periodic set for $(X,T)$. \end{lem}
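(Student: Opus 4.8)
The plan is to reduce both conditions, for a \emph{finite} set, to the single equation $uA=A$ for a minimal idempotent $u\in E(X)$, exploiting the fact that on finite sets the prolongation collapses to the pointwise action, $\D_p(A)=pA$.

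First I would unwind the definition of an almost periodic point. By the discussion preceding the lemma, a set $A\in 2^X$ is an almost periodic point of $(2^X,T)$ exactly when $\D_u(A)=A$ for some minimal idempotent $u=\Phi(\tilde u)\in E(X)$ arising from a minimal idempotent $\tilde u\in\beta T$ (equivalently a minimal idempotent $\bar u=\Psi(\tilde u)\in E(2^X)$ with $\bar u A=A$, since $\bar u A=\Psi(\tilde u)(A)=\tilde u\circ A=\D_{\Phi(\tilde u)}(A)$ and $\theta(\bar u)=u$). Because $A$ is finite, the earlier lemma giving $\D_p(A)=pA$ for finite $A$ turns this into the plain condition $uA=A$.

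For the forward implication I would start from such a minimal idempotent $u$ with $uA=A$. The key elementary step is that idempotency forces $u$ to fix $A$ pointwise: every $a\in A$ has the form $a=u(a')$ for some $a'\in A$ (as $uA=A$ is onto $A$), whence $u(a)=u(u(a'))=u(a')=a$. Thus $ua=a$ for all $a\in A$ with $u$ a minimal idempotent of $E(X)$, which is exactly the characterization of an almost periodic set recalled in Section 2. Conversely, if $A$ is an almost periodic set, Section 2 supplies a minimal idempotent $u\in E(X)$ with $ua=a$ for all $a\in A$, so $uA=A$ and hence $\D_u(A)=uA=A$, again by finiteness.

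The one point that needs care — and the only non-formal step — is realizing this $u$ as a genuine fixed set of a minimal idempotent of $E(2^X)$ (or of $\beta T$), so that $A$ really is an almost periodic point of the induced flow. Here I would invoke that the canonical surjective homomorphisms $\Phi:\beta T\to E(X)$, $\Psi:\beta T\to E(2^X)$ and $\theta:E(2^X)\to E(X)$ carry minimal idempotents onto minimal idempotents; in particular every minimal idempotent $u\in E(X)$ lifts to a minimal idempotent $\tilde u\in\beta T$, and then $\bar u=\Psi(\tilde u)$ is a minimal idempotent of $E(2^X)$ with $\theta(\bar u)=u$, so that $\bar u A=\D_u(A)=A$. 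This is precisely the correspondence already used in the characterization theorem above; the warning that $\theta$ need not be surjective on \emph{all} of $J(X)$ is irrelevant here, since only minimal idempotents are in play. Everything else is formal, so I expect no substantive obstacle beyond keeping this idempotent-lifting bookkeeping straight.
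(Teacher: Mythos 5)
Your proposal is correct and is exactly the argument the paper intends: the lemma is stated there without proof, immediately after the three ingredients you use (the characterization of almost periodic points of $(2^X,T)$ via $\D_u(A)=A$ for a minimal idempotent $u\in E(X)$, the collapse $\D_p(A)=pA$ for finite $A$, and the Section~2 characterization of almost periodic sets via $ua=a$ for all $a\in A$). The two details you supply --- that $uA=A$ for an idempotent $u$ forces $ua=a$ pointwise, and that every minimal idempotent of $E(X)$ lifts through $\Phi$ to a minimal idempotent of $\beta T$ (so the possible non-surjectivity of $\theta:J(2^X)\to J(X)$ is harmless, being an issue only for non-minimal idempotents) --- are precisely the steps worth recording.
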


Also $\overline{uA} \subset \D_u(A) (\in 2^X)$ for every $A \in 2^X$ and minimal idempotent $u \in E(X) $. And for finite $A \in 2^X$, $\overline{uA} = \D_u(A)$. Thus on a dense set in $2^X$, we have $d_H(\overline{uA}, \D_u(A)) = 0$. Hence, $\overline{uA}$ approximates $\D_u(A)$ to a fairly large extent.

\bigskip

\begin{lem}
	If $(x,y) \in P(X)$ with  $x \in F_u \cap A$ for some $A \in 2^X$, and some minimal idempotent $u \in E(X)$ then $\D_u(\{y\} \cup A) = \D_u(A)$
\end{lem}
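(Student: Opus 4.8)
The plan is to reduce the set identity to a single membership via additivity of $\D_u$ over finite unions, and then to settle that membership using the proximality hypothesis. First I would record that $\D_u$ is monotone and finitely additive. Monotonicity, $B \subseteq B' \Rightarrow \D_u(B) \subseteq \D_u(B')$, is immediate from the net definition, and the inclusion $\D_u(B_1 \cup B_2) \subseteq \D_u(B_1) \cup \D_u(B_2)$ follows by a subnet argument: any defining net $\{a_i\}$ in $B_1 \cup B_2$ is frequently in one of the two sets, and a cofinal subnet lying in, say, $B_1$ still satisfies $t_i \to u$ and $t_i a_i \to z$. Taking $B_1 = \{y\}$ and $B_2 = A$ and invoking the finite-set formula $\D_p(F) = pF$ for finite $F$, I obtain $\D_u(\{y\}\cup A) = \D_u(\{y\}) \cup \D_u(A) = \{uy\} \cup \D_u(A)$. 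Hence the asserted equality is equivalent to the single containment $uy \in \D_u(A)$.

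Next I would dispose of the contribution of $x$. Since $x \in A$ and $x \in F_u$ we have $ux = x$, so the earlier lemma $uA \subseteq \D_u(A)$ gives $x = ux \in \D_u(A)$. Thus the whole statement collapses to proving $uy = ux\,(=x)$, which is exactly where the hypotheses $(x,y)\in P(X)$ and ``$u$ a minimal idempotent fixing $x$'' must both enter. To set this up, proximality of $(x,y)$ means $(z,z) \in \overline{\cO(x,y)}$ for some $z$; using the identification of $E(X \times X)$ with the diagonal of $E(X)^2$, every point of $\overline{\cO(x,y)}$ has the form $(px,py)$ for a single $p \in E(X)$, so the diagonal point yields $px = z = py$, i.e. a coincidence $px = py$ in the enveloping semigroup.

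The main obstacle is transferring this coincidence from the element $p$ produced by proximality to the \emph{given} minimal idempotent $u$, so as to conclude $uy = ux$. The natural attempt is to left-multiply $px = py$ by $u$ and exploit $u^2 = u$ together with $ux = x$, working inside the minimal ideal $\M$ containing $u$ and using the equivalence of idempotents recalled in Section~2 to control $up$ within that ideal. This is the delicate point, and I expect it to be the crux: for a \emph{fixed} minimal idempotent the coincidence need not persist, since in general proximality only guarantees $wx = wy$ for \emph{some} minimal idempotent $w$, not for the one at hand. Making the step rigorous therefore appears to demand that $(x,y)$ be proximal \emph{through} $u$, i.e. that $ux = uy$; so the heart of the proof is precisely the verification that the prescribed $u$ — and not merely some minimal idempotent — equalizes $x$ and $y$, and I would concentrate all the work there.
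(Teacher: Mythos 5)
Your reduction is sound and in fact mirrors the paper's own argument: the subnet splitting gives $\D_u(\{y\}\cup A)=\D_u(\{y\})\cup\D_u(A)=\{uy\}\cup\D_u(A)$, so everything hinges on the single membership $uy\in\D_u(A)$. But your proposal stops there: you name $ux=uy$ as the crux, observe (correctly) that proximality only furnishes $wx=wy$ for the elements $w$ of \emph{some} minimal left ideal rather than for the prescribed $u$, and then defer the verification. As written this is not a proof --- the decisive step is announced but never carried out.

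Your suspicion about that step, however, is exactly right, and it is worth saying so concretely. The paper's proof simply asserts ``since $ux=uy$,'' and this does not follow from $(x,y)\in P(X)$ together with $ux=x$ for an arbitrary minimal idempotent $u$. The paper's own Morse--Thue example furnishes a counterexample to the lemma as stated: take $x=a$, $y=\bar{b}$, $u=u_2$ and $A=\{a\}$. Then $v_1a=\bar{b}=v_1\bar{b}$, so $(a,\bar{b})\in P(X)$, and $u_2a=a$ puts $a\in F_{u_2}\cap A$; yet, using $\D_p(F)=pF$ for finite $F$, one gets $\D_{u_2}(\{a,\bar{b}\})=\{u_2a,u_2\bar{b}\}=\{a,\bar{a}\}\neq\{a\}=\D_{u_2}(\{a\})$. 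So no argument along your proposed lines (or the paper's) can succeed without strengthening the hypothesis: one needs $u$ to lie in a minimal left ideal $\I$ witnessing the proximality of $(x,y)$ (equivalently, to assume $ux=uy$ outright), after which your reduction immediately gives $uy=ux=x\in\D_u(A)$ and the lemma follows. You should state that extra hypothesis explicitly rather than hope it can be extracted from $x\in F_u$.
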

\begin{proof}

If $(x,y) \in P(X)$ with  $x \in F_u \cap A$, then since $ ux = uy$ we have $ uy = ux \in \D_u(A) $. Clearly, $\D_u(\{y\} \cup A) \supset \D_u(A)$.

Since $ \D_u(\{y\} \cup A) =  \lbrace z \in X: \exists\ \text{nets} \ \lbrace a_{i}\rbrace \ \text{in} \ A \cup \{y\}, \ \text{and} \ \lbrace t_{i} \rbrace \ \text{in} \ T \ \text{with} \ t_i \to u  \ \text{such that}  \ t_{i}a_{i} \rightarrow z \rbrace$, the net $\lbrace a_{i}\rbrace$ either has a subnet of the constant term $y$ or else a subnet entirely of elements of $A$. In either case the resulting subnet $\{t_{i}a_{i}\}$ will converge in  $\{uy\} \cup \D_u(A) = \D_u(A)$. 
\end{proof}

\begin{cor}
	If $(A,B) \in P(2^X)$, then there exists a minimal idempotent $u \in E(2^X)$ such that $uA = uB$ i.e. $\D_{\theta(u)}(A) = \D_{\theta(u)}(B)$. Thus $\D_u$ need not be injective.
\end{cor}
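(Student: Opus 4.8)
The plan is to reduce the statement to the classical description of the proximal relation through the enveloping semigroup, applied to the induced flow $(2^X,T)$, and then to translate the resulting identity into the language of the prolongation $\D$ via the homomorphism $\theta$ of Theorem \ref{humara}. First I would unwind the hypothesis $(A,B)\in P(2^X)$: by definition the orbit closure of $(A,B)$ in $(2^X\times 2^X,T)$ meets the diagonal, so there is a net $\{t_i\}$ in $T$ with $t_iA\to C$ and $t_iB\to C$ in the Hausdorff metric for a common $C\in 2^X$. Passing to a subnet we may assume $t_i\to \tilde{p}$ in $\beta T$; then $\tilde{p}\circ A=C=\tilde{p}\circ B$, that is, $\Psi(\tilde{p})A=\Psi(\tilde{p})B$ in $E(2^X)$.

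The substantive step is to replace the arbitrary collapsing element $\tilde{p}$ by a minimal idempotent that still identifies $A$ and $B$. I would consider $R=\{\tilde{q}\in\beta T:\tilde{q}\circ A=\tilde{q}\circ B\}$. Using the composition rule $(\tilde{s}\tilde{q})\circ A=\tilde{s}\circ(\tilde{q}\circ A)$ for the circle operator, $R$ is a left ideal of $\beta T$, and it is closed since $\tilde{q}\mapsto\Psi(\tilde{q})A$ is continuous; it is non-empty because $\tilde{p}\in R$. As $\beta T$ is compact with continuous right multiplication, $R$ contains a minimal left ideal, and by Ellis' theorem that ideal contains an idempotent $\tilde{u}$, which is necessarily a minimal idempotent. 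Setting $u=\Psi(\tilde{u})\in E(2^X)$ yields a minimal idempotent with $uA=uB$. (Equivalently, one may simply invoke the standard theorem that two points of a flow are proximal precisely when a single minimal idempotent of its enveloping semigroup identifies them, applied here to $(2^X,T)$.) Finally, by Theorem \ref{humara} together with the identity $p\circ A=\Psi(p)A=\D_{\Phi(p)}(A)$, we obtain $uA=\D_{\theta(u)}(A)$ and $uB=\D_{\theta(u)}(B)$, whence $\D_{\theta(u)}(A)=\D_{\theta(u)}(B)$, as asserted.

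For the concluding claim that $\D$ need not be injective it suffices to produce, in some system, a genuinely non-trivial collapsed pair, and this is exactly what the preceding Lemma supplies: taking a minimal idempotent $u\in E(X)$ and $(x,y)\in P(X)$ with $x\in F_u\cap A$ and $y\notin A$, set $B=\{y\}\cup A\neq A$; along a net realizing the proximality of $x$ and $y$ one checks directly that $d_H\big(t_iA,t_iB\big)\le d(t_ix,t_iy)\to 0$, so $(A,B)\in P(2^X)$, while the Lemma already gives $\D_u(A)=\D_u(B)$. I expect the main obstacle to be the middle step, the upgrade of $\tilde{p}$ to a minimal idempotent, since it is the only place that genuinely uses the ideal-theoretic structure of $\beta T$ (closedness of principal left ideals, existence of idempotents in minimal left ideals, and their minimality); everything else is either the definition of proximality or a direct rewriting through $\theta$ and the circle-operator identity.
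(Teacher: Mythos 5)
Your argument is correct and is essentially the proof the paper intends (the corollary is stated without one): apply the classical characterization of proximal pairs as those identified by a minimal idempotent of the enveloping semigroup to the flow $(2^X,T)$ --- which your closed left ideal $R=\{\tilde q:\tilde q\circ A=\tilde q\circ B\}$ together with Ellis' theorem reproves from scratch --- and then rewrite $uA=\Psi(\tilde u)A=\D_{\Phi(\tilde u)}(A)=\D_{\theta(u)}(A)$ via the boxed circle-operator identity and the homomorphism $\theta$. Your verification of non-injectivity from the preceding Lemma, using $d_H(t_iA,t_i(\{y\}\cup A))\le d(t_ix,t_iy)\to 0$, is exactly the role that Lemma plays in the paper.
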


\bigskip

What happens when the system is distal? We recall the below theorem from \cite{SG} and  give an alternate proof of this, using functions $\D_p$ for $p \in E(X)$ by constructing such a factor.

\begin{theo} \cite{SG}
	Let $(X, T)$ be a minimal distal flow, then every quasifactor of $(X, T)$ is a factor of $(E(X), T)$.
\end{theo}

\begin{proof}
Let $(\X,T)$ be a quasifactor. Then there exists an $A \in 2^X$ with a $u \in (\Phi^{-1}e)$ such that $u \circ A = \D_e(A) = A$ for the only minimal idempotent $e \in E(X)$, and $\X = \overline{\cO(A)} =\bigcup \limits_{p \in E(X)} \D_p(A)$.

 Define $\Gamma: E(X) \to \X$ as 
$$\Gamma(q) = \D_q(A)$$

We observe that:

\begin{enumerate}
	
\item  Clearly $\Gamma$ is surjective.

\item  To prove that $\Gamma$ is continuous, we will show that $p_\nu \to p \Rightarrow \D_{p_\nu}(A) \to \D_p(A)$. Since $(X,T)$ is minimal, distal so is $E(X)$.

Let $U \subset X$ be any open set such that $\D_p(A) \subset U$. Consider the subbasic open set $[a,U] = \{f \in X^X: f(a) \in U\}$ for every $   a \in A$.

Then $p \in \bigcap \limits_{a \in A} [a,U] = \mathcal{U}$. Since $\mathcal{U}$ is open  there is a $d$ in the directed set associated with the net $\{p_\nu\}$ with $p_\nu \in \mathcal{U}$ for all $\nu \geq d$ in this directed set.

Now $\D_{p_\nu}(A) =  \lbrace y \in X: \exists\ \text{nets} \ \lbrace a_{\nu_i}\rbrace \ \text{in} \ A, \ \text{and} \ \lbrace t_{\nu_i} \rbrace \ \text{in} \ T \ \text{with} \ t_{\nu_i} \to p_\nu  \ \text{such that}  \ t_{\nu_i}a_{\nu_i} \rightarrow y \rbrace$. Since $p_\nu \in \mathcal{U} \ \forall \nu \geq d$ there exists a $c_\nu$ in the directed set associated with $\{t_{\nu_i}\}$ such that $t_{\nu_i} \in \mathcal{U}$ for all $\nu_i \geq c_\nu$ in the associated directed set, implying that $\D_{p_\nu}(A) \subset U$. 

Thus, $\D_{p_\nu}(A) \to \D_p(A)$ in $2^X$ i.e. $\Gamma $ is continuous.

\item  Note that $\Gamma(tp) = \D_{tp}(A) =  t\D_{p}(A) = t \Gamma(p)$.
\end{enumerate}

Thus $ (E(X),T ) \stackrel{\Gamma}{\to} (\X,T)$ gives a factor.\end{proof}

\begin{cor}
	A quasifactor of a distal, minimal flow is distal.
\end{cor}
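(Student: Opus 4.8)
The plan is to derive this corollary directly from the preceding theorem, which established that every quasifactor $(\X,T)$ of a minimal distal flow $(X,T)$ is a factor of $(E(X),T)$ via the surjective homomorphism $\Gamma(q) = \D_q(A)$. Since a factor of a distal flow is distal, it suffices to observe that $(E(X),T)$ is itself distal whenever $(X,T)$ is minimal and distal; then $(\X,T)$, being a factor of $(E(X),T)$, inherits distality.

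First I would recall why $(E(X),T)$ is distal. When $(X,T)$ is minimal and distal, Ellis' theorem (noted in the excerpt's discussion following Example \ref{MT}) tells us that $E(X)$ has a unique idempotent, namely the identity $e$, and that $E(X)$ is a group of homeomorphisms — in fact $E(X)$ is minimal and equals its own unique minimal ideal. A standard fact is that for a distal flow the enveloping semigroup $E(X)$ is a group and the flow $(E(X),T)$ is itself distal; this is already implicitly used in the proof of the theorem, where the line ``Since $(X,T)$ is minimal, distal so is $E(X)$'' appears. So I would simply cite that the distality of $(X,T)$ propagates to $(E(X),T)$.

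The second and final step is to invoke the elementary and well-known fact that distality is preserved under taking factors: if $(Z,T)$ is distal and $\sigma:(Z,T)\to(W,T)$ is a factor map, then $(W,T)$ is distal, since any proximal pair in $W$ lifts (using minimality or a standard argument with the diagonal) to a proximal pair in $Z$, forcing it to be diagonal. Applying this with $Z = E(X)$, $W = \X$, and $\sigma = \Gamma$ yields that $(\X,T)$ is distal, completing the proof.

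The main obstacle here is essentially bookkeeping rather than genuine difficulty: one must be slightly careful that the preservation-of-distality-under-factors statement is applied in the correct generality (it holds without minimality assumptions, as proximal pairs map to proximal pairs), and that the distality of $(E(X),T)$ is genuinely available. Both are standard, so I expect no serious obstruction — the corollary is a clean consequence of the theorem it follows, and the proof should be only a couple of sentences.
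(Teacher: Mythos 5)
Your proposal is correct and matches the paper's intent exactly: the paper states this as an immediate corollary of the preceding theorem, relying on the facts (already invoked in that theorem's proof) that $(E(X),T)$ is distal when $(X,T)$ is minimal distal, and that distality passes to factors. No gaps.
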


We consider examples of some minimal systems:

\begin{ex} We consider the example  first discussed by Furstenberg \cite{Fu}.
	
	Let $\T = \R/\Z$ be the one-torus, and let $\alpha \in \T$ be such that it is not a root of unity.  Define a continuous map $T : \T^2 \to \T^2$ by $T(x,y) = (x + \alpha,x + y)$, where addition is $\mod 1$. Then, the cascade $(\T^2, T)$ is  distal, minimal but not equicontinuous. Note that 
	$$T^n(x,y) = (x+n \alpha, nx +y + \frac{n(n-1)}{2} \alpha).$$
	
	We follow the constructions in \cite{ Nam} that the enveloping semigroup here,
	$$E(\T^2) \cong \T \times End(\T)$$
	
	where $End(\T)$ consists of all endomorphisms on the group $\T$.
	
	For  $f \in End(\T)$ and $\xi \in \T$, there exists a net $\{{n_\mu}\}$ of positive integers such that
	
	1) $\lim \limits_\mu n_\mu x = f(x) \ \forall x \in \T$, and
	
	2) $\lim \limits_\mu  \frac{n_\mu^2 }{2} \alpha = \xi$. 
	
	Then for $T^{n_\mu} \to p$ in $E(\T^2)$, $p(x,y) =  (x + f(\alpha),y + f(x) - f(\frac{\alpha}{2}) + \xi)$. 
	
	This gives the group operation: for $(\xi,f), (\chi,g) \in \T \times End(\T)$
	$$(\xi,f) \cdot (\chi,g) = (\xi + \chi  + f \circ g(\alpha), f+g).$$ 
	
	Also, the isomorphism $\sigma: \T \times End(\T) \to E(\T^2) $ is given as:	  
	$$\sigma(\xi,f)(x,y) = (x + f(\alpha), \xi + f(x) + y).  $$
	
	which gives the identity $\sigma(0,id) = (x+\alpha, x+y) = T(x,y)$.
	
	\bigskip
	
	Thus, for the identity $T \in E(\T^2)$:	
	
	$\  \D_T(\T \times\{0\}) =  \lbrace (x,y) \in \T^2: \exists\ \text{nets} \ \lbrace (x_{\nu},0) \rbrace \ \text{in} \ \T \times\{0\}, \ \text{and} \ \lbrace {n_\nu} \rbrace \  \text{of positive integers with} \ T^{n_\nu} \to T  \ \text{such that}  \ T^{n_\nu}(x_{\nu},0) \rightarrow (x,y) \rbrace$.
	
	Let $(x,y) \in \T^2$, then there exists a net $\{{n_\nu}\}$ of positive integers which satisfy conditions 1) and 2) stated above for $f = id$ and $\xi =0$ such that for $x_\nu =  x + \dfrac{y-x}{n_\nu} - \dfrac{n_\nu - 1}{2} \alpha$, we have $T^{n_\nu}(x_\nu,0) \to (x,y)$.
	
	$$\text{Hence} \ (x,y) \in \D_T \Longrightarrow \D_T(\T \times\{0\}) = \T^2.$$

	 and it can be seen that it gives a trivial quasifactor. Every point of $2^{\T^2}$ in the range of $\D_T$ will be an almost periodic point for $(2^{\T^2}, T_*)$, and its orbit closure a quasifactor.

	Also	$\D_T(\T \times\{0\}) \neq \T \times\{0\} $. 
	
	Thus, since $T$ is the only idempotent in $E(\T^2)$, we can see that not all points in $2^{\T^2}$ are almost periodic, i.e. $(2^{\T^2},T_*)$ is not distal. This illustrates that the induced flow of a distal, non  equicontinuous flow cannot be distal.
	
	Since $(\T^2,T)$ is distal, each quasifactor should also be distal but $(2^{\T^2},T_*)$ is not distal. Since the induced system is distal if and only if equicontinuous \cite{AN}, we note that $({\T^2},T)$ will have as quasifactors all $\X \subset2^{\T^2}$  such that $\X$ comprises of almost periodic points in $2^{\T^2}$ in a way that $(\X,T)$ is equicontinuous. Also since $(2^{\T^2},T_*)$ has the irrational rotation as the maximal equicontinuous factor, the corresponding quasifactor will be the maximal quasifactor in $(2^{\T^2},T_*)$.
	
	 A full characterization of almost periodic points of $(2^{\T^2},T_*)$ is studied in \cite{GAPP}.
\end{ex}
	
	\begin{rem}
		In the example above, $(2^{\T^2},T_*)$ is not transitive, and so $(2^{\T^2},T_*)$ is not a factor of $(E(2^{\T^2}),T_*)$. In general, for a weakly mixing $(X,T)$, $(2^X,T)$ is a factor of $(E(2^X),T)$ and the quasifactors correspond to the minimal ideals in $E(2^X)$. But minimal ideals in $E(2^X)$ project on to the minimal ideals in $E(X)$. Thus the quasifactors can be computed by locating the minimal idempotents in $E(X)$. 
	\end{rem}
	
\begin{ex}
	Recall the substitution system in Example \ref{MT}. We  look for the almost periodic points in $2^X$ here without computing $E(2^X)$. Note that $(X, \sigma)$ here is weakly mixing and so $(2^X, \sigma_*)$ will be transitive and hence $(2^X,\sigma_*)$ will be a factor of $(E(2^X),\sigma_*)$.
	
Note that the idempotents $u_1,v_1$ and $u_2,v_2$ in $E(X)$ act like some kind of duals respectively, and these are the only idempotents. Hence $\theta: J(2^X) \to J(X)$ here will be surjective. For any $A \in 2^X$, $\D_{u_1}(A)$ and $\D_{v_1}(A)$ will be off the orbits of $a, \bar{a}$ while $\D_{u_2}(A)$ and $\D_{v_2}(A)$ will be off the orbits of $b, \bar{b}$, or they will be the entire $X$ and these images will be the almost periodic points for $2^X$. Also  $(2^X, \sigma_*)$ will have transitive points, with properties as mentioned in \cite{AN}, and such points will not be almost periodic. Thus, no almost periodic point in $2^X$, other than $X$, will contain points from the orbits of both $a,\bar{a}$ and $b,\bar{b}$ respectively. Every quasifactor for $(X,\sigma)$ will have some kind of a dual quasifactor.
\end{ex}

\vspace{12pt}
\bibliography{xbib}

\end{document}